\newcommand{\witi}{\widetilde}
\newcommand{\fracd}[2]{\frac {\displaystyle #1}{\displaystyle #2 }}
\newcommand{\nn}{{\mathbb N}}
\newcommand{\rr}{{\mathbb R}}
\newcommand{\zz}{{\mathbb Z}}
\newcommand{\calf}{{\mathcal F}}
\newcommand{\calg}{{\mathcal G}}
\newcommand{\veps}{\varepsilon}
\newcommand{\beq}{\begin{eqnarray*}}
\newcommand{\feq}{\end{eqnarray*}}
\newcommand{\beqn}{\begin{eqnarray}}
\newcommand{\feqn}{\end{eqnarray}}
\newcommand{\as}{\mbox{\rm a.\,s.}}
\newcommand{\one}{\mbox{\large \bf 1}}
\newtheorem{theorem}{Theorem}
\newtheorem{lemma}[theorem]{Lemma}
\newtheorem*{theorem*}{Theorem}
\newtheorem*{corollary*}{Corollary}
\newtheorem{proposition}[theorem]{Proposition}
\newtheorem{remark}[theorem]{Remark}
\newtheorem*{cond-b}{Condition~$\mathbf{B}$}
\newtheorem*{cond-c}{Condition~$\mathbf{C}_\kappa$}
\newtheorem*{cond-cl}{Condition~$\mathbf{C(\kappa,L)}$}
\begin{document}
\title{Relative growth of the partial sums \\ of certain random Fibonacci-like sequences}
\author{Alexander~Roitershtein\thanks{Dept. of Mathematics, Iowa State University, Ames, IA 50011, USA; e-mail: roiterst@iastate.edu}
\and
Zirou Zhou \thanks{Dept. of Mathematics, Iowa State University, Ames, IA 50011, USA; e-mail: zzhou@iastate.edu}
}
\date{January 19, 2017; Revised August 8, 2017}
\maketitle
\begin{abstract}
We consider certain Fibonacci-like sequences $(X_n)_{n\geq 0}$ perturbed with a random noise. Our main result is
that $\frac{1}{X_n}\sum_{k=0}^{n-1}X_k$ converges in distribution, as $n$ goes to infinity, to a random variable $W$ with Pareto-like distribution tails. We show that $s=\lim_{x\to \infty} \frac{-\log P(W>x)}{\log x}$ is a monotonically decreasing characteristic of the input noise, and hence can serve as a measure of its strength in the model. Heuristically,
the heavy-taliped limiting distribution, versus a light-tailed one with $s=+\infty,$ can be interpreted as an evidence supporting the idea that the noise is ``singular" in the sense that it is ``big" even in a ``slightly" perturbed sequence.
\end{abstract}
{\em MSC2010: } Primary~60H25, 60J10, secondary~60K20.\\
\noindent{\em Keywords}: random linear recursions, tail asymptotic, Lyapunov constant, Markov chains, regeneration structure.

\section{Introduction and statement of the main result}
\label{intro}
Let $(\eta_n)_{n\geq 0}$ be a sequence of independent Bernoulli random variables with $P(\eta_n=1)=1-\veps,$ $P(\eta_n=0)=\veps$ for some $\veps\in(0,1).$  We consider a sequence $(X_n)_{n\geq 0}$ of real-valued random variables generated by the recursion
\beqn
\label{xdef}
X_{n+1}=aX_n+b\eta_{n-1}X_{n-1},\qquad n\in\nn,
\feqn
with the initial conditions $X_0=1,$ $X_1=a,$ where
\beqn
\label{conda}
a\in(0,1)\qquad \mbox{\rm and} \qquad b>1-a
\feqn
are given deterministic constants.
\par
The above construction is inspired by the models considered in \cite{wdisorder}. The sequence $X_n$ can be thought as a perturbation with noise of its deterministic counterpart, which is defined through the recursion equation
\beq
Z_{n+1}=aZ_n+bZ_{n-1}
\feq
and the initial conditions $Z_0=1,$ $Z_1=a.$ Throughout the paper we are interested in the dependence of model's characteristics on the parameter $\veps$ that varies while the recursion coefficients $a,b$ are maintained fixed.
\par
It is not hard to check that $\lim_{n\to\infty} \frac{1}{Z_n}\sum_{k=0}^{n-1}Z_k=(\lambda_1-1)^{-1},$ where $\lambda_1$ is a constant
defined below in \eqref{lambdas}. In this paper we are primarily concerned with the asymptotic behavior of the following sequence
\beqn
\label{win}
W_n:=\frac{1}{X_n}\sum_{k=0}^{n-1}X_k, \qquad n\in\nn,
\feqn
which describes the rate of growth of the partial sums relatively to the original sequence $X_n.$ Our main result is stated in the following theorem. Intuitively, it can be interpreted as a saying that while adding more noise to the input by increasing the value of $\veps$ yields more noise in the output sequence
$W_n,$ the noise remains large for all, even arbitrarily small, values of the parameter $\veps>0$ in some rigorous sense.
\begin{theorem}
\label{main}
Let $W_n$ be defined in \eqref{win}. Then the following holds true:
\item [(a)] There exists $\veps^*\in (0,1)$ such that
\begin{itemize}
\item [(i)] If $\veps\in (0,\veps^*),$ then $W_n$ converges in distribution, as $n$ goes to infinity, to a non-degenerate random variable $W^{(\veps)}.$
\item [(ii)] If $\veps\in [\veps^*,1),$ then $\lim_{n\to\infty}P(W_n>x)=1$ for any $x>0,$ that is $W^{(\veps)}=+\infty.$
\item [(iii)] For any $\veps \in (0,\veps^*),$ there exist reals $s_\veps\in (0,\infty)$ and $K_\veps\in (0,\infty)$ such that
    \beq
    \lim_{x\to\infty}P(W^{(\veps)}>x)x^{s_\veps}=K_\veps.
    \feq
\end{itemize}
\item [(b)] Furthermore, $s_\veps$ is a continuous strictly decreasing function of $\veps$ on $(0,\veps^*),$ and
\beqn
\label{limits}
\lim_{\veps\downarrow 0}s_\veps=\infty \qquad \mbox{\rm while}\qquad \lim_{\veps\uparrow \veps^*}s_\veps=0.
\feqn
\end{theorem}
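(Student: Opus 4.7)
My starting point would be to pass to the consecutive ratios $Y_n:=X_n/X_{n-1}$, which by \eqref{xdef} satisfy the scalar recursion $Y_{n+1}=a+b\eta_{n-1}/Y_n$. Hence $(Y_n,\eta_{n-1})$ is a Markov chain confined to the compact product space $[a,a+b/a]\times\{0,1\}$. Dividing \eqref{win} through by $X_n$ yields the backward affine recursion
\beq
W_n=\frac{1+W_{n-1}}{Y_n},
\feq
and iterating this identity together with a time-reversal of the stationary chain $(Y_j)$ exhibits $W_n$ as the $n$-th partial sum of the perpetuity $\sum_{k\geq 1}\prod_{j=1}^k \widetilde Y_j^{-1}$. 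All assertions in part~(a) should then be governed by the sign of the Lyapunov exponent $\gamma(\veps):=\ee_{\pi_\veps}[\log Y]$, where $\pi_\veps$ denotes the stationary law of the chain.

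\textbf{Part (a).} First I would check uniform ergodicity of $(Y_n,\eta_{n-1})$---it regenerates for example whenever two consecutive innovations take value $1$---and continuity of $\pi_\veps$ in $\veps$, which yields continuity of $\gamma(\veps)$. The endpoints are easy: $\gamma(0)=\log\lambda_1>0$ because \eqref{conda} forces $\lambda_1>1$, while $\gamma(1)=\log a<0$. Setting $\veps^*:=\sup\{\veps\in(0,1):\gamma(\veps)>0\}$ then delivers the threshold. For $\veps<\veps^*$ the reversed products decay exponentially a.s., so the perpetuity converges to a non-degenerate limit $W^{(\veps)}$, giving~(i); for $\veps\in[\veps^*,1)$ the products no longer decay and a straightforward comparison using the ergodic theorem forces $W_n\to\infty$, giving~(ii). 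For the Pareto tail in~(iii) I would invoke the Markov-modulated extension of the Kesten--Goldie theorem (as developed by Alsmeyer, Roitershtein, and others) applied to the affine recursion above. Its hypotheses---uniform ergodicity, boundedness of $Y_n$ away from $0$ and $\infty$, non-arithmeticity of $\log Y$, and existence of polynomial moments---all follow from the compactness of the state space and the non-degeneracy of the Bernoulli driver. The index $s_\veps$ is then characterized as the unique positive zero of $\Lambda_\veps(s):=\log\mathrm{spr}(K_s^\veps)$, where $(K_s^\veps f)(y):=\ee_y[Y_1^{-s}f(Y_1)]$.

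\textbf{Part (b).} Perron--Frobenius perturbation theory makes $\Lambda_\veps(\cdot)$ smooth and strictly convex, with $\Lambda_\veps(0)=0$ and $\Lambda_\veps'(0)=-\gamma(\veps)<0$ for $\veps<\veps^*$, so $s_\veps$ is well defined and depends continuously on $\veps$ via the implicit function theorem. Strict monotonicity in $\veps$ I would obtain by coupling the Bernoulli sequences at two parameters $\veps_1<\veps_2$ so that $\eta^{(\veps_1)}_n\geq\eta^{(\veps_2)}_n$ pathwise; this propagates to $Y^{(\veps_1)}_n\geq Y^{(\veps_2)}_n$, and combined with Kingman's subadditive ergodic theorem applied to $\log\prod_{j=1}^n Y_j^{-s}$ should give $\Lambda_{\veps_1}(s)<\Lambda_{\veps_2}(s)$ for every $s>0$, hence $s_{\veps_1}>s_{\veps_2}$. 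The limits in \eqref{limits} are then consequences of the convexity picture: as $\veps\downarrow 0$ the chain concentrates at the deterministic ratio $\lambda_1$, so $\Lambda_\veps(s)\to -s\log\lambda_1<0$ pointwise, pushing the root to $+\infty$; as $\veps\uparrow\veps^*$ the slope $\Lambda_\veps'(0)\to 0$, and strict convexity forces $s_\veps\downarrow 0$.

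\textbf{Main obstacle.} I expect the principal difficulty to be the strict monotonicity in part~(b). The pathwise coupling of $Y_n$ is routine, but $\pi_\veps$ itself varies with $\veps$, and one must verify that this dependence does not cancel the monotonicity introduced at the innovation level---i.e., that the Perron eigenvalue $e^{\Lambda_\veps(s)}$ is \emph{strictly}, not merely weakly, increasing in $\veps$. A secondary delicate step is ruling out accumulation of $s_\veps$ at a positive value as $\veps\uparrow\veps^*$, which amounts to a uniform modulus-of-convexity estimate on $\Lambda_\veps$ near the critical parameter.
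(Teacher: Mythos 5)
Your overall strategy is close to the paper's: pass to consecutive ratios, locate the phase transition at the zero of a Lyapunov exponent $\gamma(\veps)$, obtain the Pareto tail from a Kesten--Goldie-type result, and prove monotonicity of $s_\veps$ via a pathwise coupling of the Bernoulli innovations. Parts (a)-(i) and (a)-(ii), the definition of $\veps^*$, and the broad shape of part (b) all match. However, there are two substantive problems.

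\textbf{The main gap is in (a)-(iii).} You propose to ``invoke the Markov-modulated extension of the Kesten--Goldie theorem'' and assert that its hypotheses are met because the ratio chain lives in a compact interval and is uniformly ergodic. This step does not go through as stated. The perpetuity representation $W^{(\veps)}=\sum_{k\geq 0}\prod_{j\leq k}R_{-j}$ is driven by the \emph{time-reversed} stationary chain, and the paper shows explicitly that its transition kernel $H$ fails the strong Doeblin (minorization) condition required by the Markov-modulated Kesten theorems you have in mind: from state $S_i$ the reversed chain moves deterministically down to $S_{i-1},\dots$ until it reaches $S_1$, so there is no uniform $m$ with $H^m(i,\cdot)\geq c\mu(\cdot)$. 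Compactness of the ambient interval does not fix this, since the chain is supported on the discrete set $\{S_k\}_{k\in\nn}$ and mixing times from $S_i$ grow linearly in $i$. The paper's way around this is the real content of Lemma~\ref{stablef}: it notes that the \emph{forward} kernel does satisfy a one-step minorization (regeneration whenever $\eta_{k}=0$, sending the ratio to $1/a$), cuts the perpetuity into i.i.d.\ blocks $(A_i,B_i)$ at those regeneration times, and then applies the classical i.i.d.\ Kesten--Goldie theorem to the block pairs, verifying $E_P(B_0^{s_\veps})=1$ and a moment bound on $A_0$ by operator-theoretic arguments on the sub-Markov kernel $\Theta_s$. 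Your proposal skips exactly this step, and it is not a formality --- it is where the argument lives.

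\textbf{Two secondary points.} First, your regeneration trigger is wrong: the chain does not regenerate ``whenever two consecutive innovations take value $1$'' (two $1$'s do not reset $Y_{n}$ to a fixed point); it regenerates when an innovation is $0$, which forces $Y_{n+2}=a$ deterministically. This is minor but propagates into the block decomposition you would need. Second, for part (b), your coupling idea for strict monotonicity is in the right direction, but invoking Kingman's subadditive ergodic theorem on $\log\prod Y_j^{-s}$ only controls the a.s.\ growth rate, which is $-s\gamma(\veps)$, not the moment Lyapunov function $\Lambda_\veps(s)=\lim\frac1n\log E(\prod R_j^s)$; these are different objects and the former's monotonicity does not directly give the latter's. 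The paper instead combines the pathwise coupling bound with H\"older's inequality to get a quantitative lower bound $\Lambda_{\veps_1}(t)-\Lambda_\veps(t)\geq t(\veps_1-\veps)\log(1+b/(a^2+b))>0$, and an analogous Lipschitz upper bound for continuity of $\Lambda_\veps(t)$ in $\veps$ (replacing your implicit-function-theorem argument). Your diagnosis of the ``main obstacle'' is therefore somewhat misplaced: the monotonicity, once you use the right inequality, is comparatively direct; the harder point is repairing the tail asymptotics in (a)-(iii).
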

$\mbox{}$
\par
The specific choice of the initial values $X_0=1$ and $X_1=a$ is technically convenient, but is not essential. In particular, while asserting it ultimately yields part {\em (a)} of Lemma~\ref{ratios},  changing it wouldn't affect part {\em (b)} of the lemma. Theorem~\ref{main} remains valid for an arbitrary pair $(X_0,X_1)$ of positive numbers. See Remark~\ref{essay} in Section~\ref{proofm} for details. To extend Theorem~\ref{main} to a linear recursion \eqref{xdef} under a more general than \eqref{conda} assumption $a\neq 0,$ $b>0,$ one can consider $\witi X_n=\theta^{-n} X_n$ with an arbitrary $\theta\in\rr$ such that $a\theta>0$ and $2|a|<2|\theta|<|a|+\sqrt{a^2+4b}.$ The new sequence $\witi X_n$ satisfies the recursion $\witi X_{n+1}=\witi a\witi X_n+\witi b\eta_{n-1}\witi X_{n-1}$ with $\witi a=a/\theta<1$ and $\witi b=b/\theta^2>1-\witi a.$ Some other readily available extensions of Theorem~\ref{main} are discussed in Section~\ref{remarks} below.
\par
The proof of Theorem~\ref{main} is given in Section~\ref{proofm} below. Note that the theorem implies that the limiting distribution $W^{(\veps)}$ has power
tails as long as it is finite and non-degenerate. We remark that additional properties of the constants $\veps^*$ and $s_\veps$ can be inferred from the auxiliary results discusses in Section~\ref{proofm} below. In particular, see Proposition~\ref{prop} which provides some information on the relation of $s_\veps$ to the Lyapunov exponent and the moments of the reciprocal sequence $X_n^{-1}.$
\par
For an integer $n\geq 0,$ let
\beqn
\label{arn}
R_n=\frac{X_n}{X_{n+1}}.
\feqn
The sequence $R_n$ forms a Markov chain since \eqref{xdef} is equivalent to $R_n=(a+b\eta_{n-1}R_{n-1})^{-1}.$ Notice that, since $X_0=1,$ for $n\in\nn$ we have $X_n^{-1}=\prod_{k=0}^{n-1} R_n$ and
\beqn
\label{recur}
W_{n+1}=R_nW_n+R_n \quad \mbox{\rm or, equivalently,}\quad (W_{n+1}+1)=R_n(W_n+1)+1.
\feqn
The proof of the assertion {\em (a)-(iii)} of Theorem~\ref{main} is carried out by an adaption of the technique used in \cite{stable} to obtain an
extension of Kesten's theorem \cite{goldie,kesten} for linear recursions
with i.\,i.\,d. coefficients to a setup with Markov-dependent coefficients. More specifically, to prove that the distribution of $W^{(\veps)}$ is asymptotically power-tailed, we verify in Section~\ref{proofm} that Markov chain $R_n$ satisfies Assumption~1.5 in \cite{stable}. This allows us to borrow key auxiliary results from \cite{stable,mrec} and also use a variation of the underlying regeneration structure argument in \cite{stable}. See Lemma~\ref{stablef} in Section~\ref{proofm} below for details.
\par
The proof of Theorem~\ref{main} relies in particular on the asymptotic analysis of the negative moments of $X_n$ (more specifically, the function $\Lambda_\veps(t)$ defined below in \eqref{istable}). First positive integer moments of $X_n$ can be in principle computed explicitly. We conclude this introduction with the statement of a result which is not directly connected to Theorem~\ref{main}, but might be useful, for instance,
for the statistical analysis of the sequence $X_n.$ Here and throughout this paper we use the notation $E_P$ to denote the expectation operator under the probability law $P$ (in order to distinguish it from the expectation $E_Q,$
where $Q$ is introduced in Section~\ref{prelim} below). For $\veps\in[0,1],$ let
\beqn
\label{lambdase}
\lambda_{\veps,1}=\frac{a+\sqrt{a^2+4b(1-\veps)}}{2}>0\quad\mbox{\rm and} \quad \lambda_{\veps,2}=\frac{a-\sqrt{a^2+4b(1-\veps)}}{2}<0
\feqn
denote the roots of the characteristic equation $\lambda^2=a\lambda+b(1-\veps).$ We have:
\begin{proposition}
\label{stat}
For any integer $n\geq 0,$
\item [(a)]  We have $E_P(X_n)=\fracd{\lambda_{\veps,1}^{n+1}-\lambda_{\veps,2}^{n+1}}{\lambda_{\veps,1}-\lambda_{\veps,2}}.$
In particular, $\lim_{n\to\infty}\frac{1}{n}\log E_P(X_n)=\lambda_{\veps,1}.$
\item [(b)]  We have $\lim_{n\to\infty}\frac{1}{n}\log E_P(X_n)=a\lambda_{\veps,1}+b(1-\veps).$ More precisely,
\beq
E_P(X_n^2)=c_1[a\lambda_{\veps,1}+b(1-\veps)]^n+c_2[a\lambda_{\veps,2}+b(1-\veps)]^n-\frac{2(-b)^{n+1}(1-\veps)^n}{4b(1-\veps)+a^2},
    \feq
    where are the constants $c_1$ and $c_2$ are chosen in a manner consistent with the initial conditions $X_0=1$ and $X_1=a.$
\item [(c)]  Letting $U_{n,k}:=E_P(X_nX_{n+k})$ for an integer $k\geq 0,$
\beq
U_{n,k}=d_{n,1}\lambda_{\veps,1}^k+d_{n,2}\lambda_{\veps,2}^k,
\feq
where are the constants $d_{n,1}$ and $d_{n,2}$ are chosen in a manner consistent with the initial conditions $U_{n,0}=E_P(X_n^2)$ and
\beq
U_{n,1}=E_P(X_nX_{n+1})=\frac{1}{a}\bigl[E_P(X_{n+1}^2)-b(1-\veps)E_P(X_n^2)-(-b)^{n+1}(1-\veps)^n\bigr].
\feq
In particular, for any $k\in\nn$ we have $\lim_{n\to\infty}\frac{1}{n}\log E_P(X_nX_{n+k})=a\lambda_{\veps,1}+b(1-\veps).$
\end{proposition}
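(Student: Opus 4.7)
The plan is that each part reduces to a deterministic linear recursion obtained by taking expectations in (a product of copies of) \eqref{xdef}. The probabilistic content is minimal: $\eta_{n-1}$ is independent of $\sigma(\eta_0,\ldots,\eta_{n-2})\supseteq\sigma(X_0,\ldots,X_n)$, with $E_P(\eta_{n-1})=1-\veps$ and $\eta_{n-1}^2=\eta_{n-1}$; everything after that is linear algebra. For \emph{(a)}, I just take expectations in \eqref{xdef} to get
\[
E_P(X_{n+1}) = aE_P(X_n) + b(1-\veps)E_P(X_{n-1}),\qquad n\geq 1,
\]
a second-order recursion whose characteristic equation is exactly the one defining $\lambda_{\veps,1},\lambda_{\veps,2}$ in \eqref{lambdase}. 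The stated closed form for $E_P(X_n)$ is the unique linear combination of $\lambda_{\veps,1}^n,\lambda_{\veps,2}^n$ matching the initial data $E_P(X_0)=1$ and $E_P(X_1)=a=\lambda_{\veps,1}+\lambda_{\veps,2}$, and the logarithmic limit drops out since $\lambda_{\veps,1}>|\lambda_{\veps,2}|$ under \eqref{conda}.

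For \emph{(b)}, squaring \eqref{xdef} and using $\eta_{n-1}^2=\eta_{n-1}$ produces $X_{n+1}^2 = a^2X_n^2 + 2ab\eta_{n-1}X_nX_{n-1} + b^2\eta_{n-1}X_{n-1}^2$, while multiplying \eqref{xdef} by $X_n$ gives $X_{n+1}X_n = aX_n^2 + b\eta_{n-1}X_nX_{n-1}$. Writing $v_n:=E_P(X_n^2)$, $u_n:=E_P(X_nX_{n-1})$, $\beta:=b(1-\veps)$, and taking expectations yields the coupled linear system
\[
v_{n+1} = a^2 v_n + 2a\beta u_n + b\beta v_{n-1},\qquad u_{n+1} = av_n + \beta u_n,
\]
with initial data $v_0=1$, $v_1=a^2$, $u_1=a$. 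Using the second equation to eliminate $u_n$ collapses this to a third-order homogeneous linear recursion in $v_n$ alone, and diagonalising the associated $3\times 3$ companion matrix identifies the three basis sequences $[a\lambda_{\veps,1}+\beta]^n$, $[a\lambda_{\veps,2}+\beta]^n$, and $(-b)^{n+1}(1-\veps)^n$ in the claim. The coefficients $c_1,c_2$ and the explicit prefactor $-2/(a^2+4\beta)$ on the transient basis sequence are then read off by matching initial conditions.

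For \emph{(c)}, I fix $n$ and view $U_{n,k}$ as a function of $k\geq 0$. For $k\geq 1$, multiplying the shifted recursion $X_{n+k+1}=aX_{n+k}+b\eta_{n+k-1}X_{n+k-1}$ by $X_n$ and taking expectations gives
\[
U_{n,k+1} = aU_{n,k} + b(1-\veps)U_{n,k-1},\qquad k\geq 1,
\]
because $\eta_{n+k-1}$ is independent of both $X_n$ and $X_{n+k-1}$ as soon as $k\geq 1$. This is \emph{exactly} the recursion of part (a), so $U_{n,k}=d_{n,1}\lambda_{\veps,1}^k+d_{n,2}\lambda_{\veps,2}^k$; the two coefficients are pinned down by the initial values $U_{n,0}=v_n$ (from (b)) and $U_{n,1}$, the latter obtained by solving the relation $v_{n+1}=aU_{n,1}+\beta v_n+(-b)^{n+1}(1-\veps)^n$ derived in the course of (b). The Lyapunov-type limit then follows from the dominant eigenvalue $\lambda_{\veps,1}$. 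The main obstacle is the bookkeeping in (b): because of the Bernoulli variance, the $3\times 3$ transfer matrix is \emph{not} the Kronecker square of the deterministic transfer matrix, so identifying its three eigenvalues — in particular verifying that one of them produces precisely the basis sequence $(-b)^{n+1}(1-\veps)^n$ with the specific normalising factor — is where all the algebra concentrates. Once (b) is in place, parts (a) and (c) are essentially free.
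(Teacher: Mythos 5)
For parts (a) and (c) your argument matches the paper's: take expectations in \eqref{xdef} (respectively in its shift multiplied by $X_n$), recognise a constant-coefficient second-order recursion, and solve using the characteristic roots $\lambda_{\veps,1},\lambda_{\veps,2}$. For (b) your set-up is also correct: with $\beta:=b(1-\veps)$, $v_n:=E_P(X_n^2)$, $u_n:=E_P(X_nX_{n-1})$, squaring \eqref{xdef} and using $\eta_{n-1}^2=\eta_{n-1}$ gives $v_{n+1}=a^2v_n+2a\beta u_n+b\beta v_{n-1}$ and $u_{n+1}=av_n+\beta u_n$, and eliminating $u_n$ yields the third-order recursion $v_{n+1}=(a^2+\beta)v_n+\beta(a^2+b)v_{n-1}-b\beta^2v_{n-2}$, with characteristic polynomial $p(\mu)=\mu^3-(a^2+\beta)\mu^2-\beta(a^2+b)\mu+b\beta^2$. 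But the step you yourself flag as ``where all the algebra concentrates'' --- verifying that the three roots are $a\lambda_{\veps,1}+\beta$, $a\lambda_{\veps,2}+\beta$, and $-\beta$ --- is asserted rather than carried out, and it fails: $p(-\beta)=2\beta^2(b-\beta)=2b\veps\beta^2\neq0$ and $p(a\lambda_{\veps,1}+\beta)=-ab\veps\beta\,\lambda_{\veps,1}\neq0$ for every $\veps\in(0,1)$. So the identification of the three basis sequences does not go through, and the proof of (b) has a genuine gap exactly at the place you anticipated.

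The issue is not merely in your write-up: the displayed formula in Proposition~\ref{stat}(b) itself appears to be incorrect for $\veps\in(0,1)$. From $X_2=a^2+b\eta_0$ one gets directly $E_P(X_2^2)=a^4+2a^2b(1-\veps)+b^2(1-\veps)$, whereas the Proposition's formula (with $c_1,c_2$ fitted to $E_P(X_0^2)=1$, $E_P(X_1^2)=a^2$) evaluates to $a^4+2a^2b(1-\veps)+2b^2(1-\veps)-b^2(1-\veps)^2$; the discrepancy is $b^2\veps(1-\veps)>0$. The paper's own proof of (b) takes a different route from yours --- it introduces $h_n:=X_{n-1}X_{n+1}-X_n^2$ and derives a Cassini-type closed form for $E_P(h_n)$, collapsing the problem to a second-order non-homogeneous recursion --- but that derivation has errors as well: it writes $b^2(1-\veps)^2E_P(X_{n-1}^2)$ where the correct term is $b^2(1-\veps)E_P(X_{n-1}^2)$ (since $E_P(\eta_{n-1}^2)=E_P(\eta_{n-1})=1-\veps$), and the claimed identity $E_P(h_{n+1})=(-1)^nb^{n+1}(1-\veps)^n$ already fails at $n=2$. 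Your idea of diagonalising the $3\times3$ transfer matrix is the right way to get a correct closed form, but you must actually compute its eigenvalues rather than import the factorisation from the statement; once $\veps>0$ they are \emph{not} $\lambda_{\veps,1}^2$, $\lambda_{\veps,2}^2$, $-b(1-\veps)$, which is precisely the ``transfer matrix is not a Kronecker square'' effect you pointed out.
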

$\mbox{}$
\par
The rest of the paper is organized as follows. Section~\ref{prelim} contains a preliminary discussion and an auxiliary monotonicity result (with respect to the parameter $\veps$) about the Lyapunov constant of the sequence $X_n.$ The proof of Theorem~\ref{main} is included in Section~\ref{proofm}. The proof of Proposition~\ref{stat} is deferred to Section~\ref{stats}. Finally, section~\ref{remarks} contains some concluding remarks regarding possible extensions of the results in Theorem~\ref{main}.
\section{Preliminaries: Lyapunov constant of $X_n$}
\label{prelim}
This section includes a preliminary discussion which is focused on the random variable $R_n$ defined in \eqref{arn} and the Lyapunov constant
$\gamma(\veps)$ introduced below. The main purpose here is to obtain a monotonicity result in Proposition~\ref{renewal}. The coupling construction employed
to prove Proposition~\ref{renewal} is also used in Section~\ref{proofm}, to carry out the proof of Propositions~\ref{prop} and~\ref{prop3}.
\par
Recall $\lambda_{\veps,1}$ and $\lambda_{\veps,2}$ from \eqref{lambdase}. In order to simplify the notation, denote
\beqn
\label{lambdas}
\lambda_1:=\lambda_{0,1}=\frac{a+\sqrt{a^2+4b}}{2}\qquad\mbox{\rm and} \qquad \lambda_2:=\lambda_{2,0}=\frac{a-\sqrt{a^2+4b}}{2}.
\feqn
Notice that the condition $a+b>1$ ensures $\lambda_1>1.$ Using the initial conditions $Z_0=1$ and $Z_1=a,$ one can verify that
\beqn
\label{ze}
Z_n=\frac{\lambda_1^{n+1}-\lambda_2^{n+1}}{\lambda_1-\lambda_2},\qquad n\geq 0.
\feqn
Using \eqref{ze} one can obtain a Cassini-type identity $Z_{n-1}Z_{n+1}-Z_n^2=b^n(-1)^{n+1}$ (see, for instance, Theorem~5.3 in \cite{fbook} for the original Fibonacci sequence result) and the identity $Z_{n+1}-\lambda_1Z_n=\lambda_2^{n+1}.$ The alternating sign of the right-hand side in these two identities yields
for $k\in\nn,$
\beqn
\label{ordering}
a\leq \frac{Z_{2k-1}}{Z_{2k-2}}<\frac{Z_{2k+1}}{Z_{2k}}<\lambda_1<\frac{Z_{2k+2}}{Z_{2k+1}}<\frac{Z_{2k}}{Z_{2k-1}}\leq \frac{a^2+b}{a}.
\feqn
Recall $R_n$ from \eqref{arn}. The (unique) stationary distribution for the countable, irreducible and aperiodic Markov chain $R_n$ can be obtained as follows. For $n\in\nn,$ let
\beqn
\label{tin}
T_n=\sup\{i\leq n: \eta_i=0\}.
\feqn
Fix any $k\in\nn.$  Then for a positive integer $n>k$ we have
\beq
&&
P\bigl(R_n=Z_{k-1}/Z_k\bigr)=P(T_n=n-k-1)
\\
&&
\qquad
\qquad
=P\bigl(\eta_{n-k-1}=0,\eta_{n-k}=\eta_{n-k+1}=\ldots=\eta_{n-3}=\eta_{n-2}=1\bigr)=\veps(1-\veps)^{k-1}.
\feq
Thus, $\lim_{n\to \infty} P\bigl(R_n=Z_{k-1}/Z_k\bigr)= \veps(1-\veps)^{k-1}.$
The stationary sequence $(R_n)_{n\in\nn}$ can be extended into a double-infinite stationary sequence $(R_n)_{n\in\zz}$ \cite{durrett}.
Let $Q$ denote the law of the time-reversed stationary Markov chain $(R_{-n})_{n\in \zz}.$
We have established the following result:
\begin{lemma}
\label{ratios}
Let $S_k=\frac{Z_{k-1}}{Z_k},$ $k\geq 1.$ Then:
\item [(a)] For all $n\in\zz,$ we have $P\bigl(R_n\in \{S_k:k\in\nn\}\bigr)=1.$
\item [(b)] Furthermore, $Q(R_n=S_k)=\veps(1-\veps)^{k-1}$ for any $n\in\zz,k\in\nn.$
\end{lemma}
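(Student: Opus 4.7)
The plan is to verify (a) by forward induction starting from the given initial values, and to derive (b) from the explicit one-step dynamics of the chain, essentially packaging the enumeration already sketched in the paragraph preceding the lemma.

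For part (a), I would begin from $R_0 = X_0/X_1 = 1/a = Z_0/Z_1 = S_1$, which places the chain inside the target set $\{S_k : k \geq 1\}$. The inductive step applies the defining recursion $R_n = (a + b\eta_{n-1} R_{n-1})^{-1}$: if $R_{n-1} = S_j$, then $\eta_{n-1} = 0$ collapses the formula to $R_n = 1/a = S_1$, while $\eta_{n-1} = 1$ yields $R_n = Z_j/(aZ_j + bZ_{j-1}) = Z_j/Z_{j+1} = S_{j+1}$ by the recursion for $Z_n$. Hence $\{S_k : k \geq 1\}$ is absorbing for the forward dynamics, and the claim holds for every $n \geq 0$. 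To extend to negative $n$ I would invoke stationarity: every one-dimensional marginal of the two-sided stationary extension $(R_n)_{n \in \zz}$ agrees with the common limiting distribution of $R_m$ for large $m$, and the latter is concentrated on $\{S_k : k \geq 1\}$ by what was just proved.

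For part (b), I would use the combinatorial identification that, for $k \leq n$, the event $\{R_n = S_k\}$ equals $\{\eta_{n-k} = 0,\ \eta_{n-k+1} = \cdots = \eta_{n-1} = 1\}$; this is read off by unrolling the induction in part (a) backwards from $R_n$ to the most recent zero among the $\eta_i$'s. Independence of the Bernoulli inputs gives $P(R_n = S_k) = \veps(1-\veps)^{k-1}$ whenever $n \geq k$, so the limit as $n \to \infty$ is $\pi(S_k) := \veps(1-\veps)^{k-1}$. By irreducibility and aperiodicity of the chain this limit must coincide with the unique stationary distribution, and since time-reversal of a stationary Markov chain preserves its one-dimensional marginals, one reads off $Q(R_n = S_k) = \pi(S_k)$ for every $n \in \zz$ and $k \geq 1$.

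No substantial obstacle should arise. The only technical ingredient is the existence of the two-sided stationary extension and of its time-reversal, which is the standard Kolmogorov-extension construction cited from \cite{durrett}; the algebraic heart of the argument is the Fibonacci-like recursion defining $Z_n$, and the probabilistic heart is the one-to-one correspondence between the value of $R_n$ and the position of the most recent zero in the Bernoulli input sequence.
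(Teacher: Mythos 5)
Your argument is correct and follows the paper's route: part~(a) by induction on the recursion $R_n=(a+b\eta_{n-1}R_{n-1})^{-1}$ (the set $\{S_k:k\in\nn\}$ is absorbing, with $\eta_{n-1}=0$ sending the chain to $S_1$ and $\eta_{n-1}=1$ sending $S_j\mapsto S_{j+1}$), and part~(b) by identifying $\{R_n=S_k\}$ with the event that the most recent zero among the Bernoulli inputs occurred exactly $k$ steps back, computing its geometric probability, and noting that stationarity and time-reversal preserve the one-dimensional marginal. Your indexing $\{\eta_{n-k}=0,\ \eta_{n-k+1}=\cdots=\eta_{n-1}=1\}$ is in fact the precise one; the paper's display shifts the indices by one, but both versions count $k-1$ ones and yield $\veps(1-\veps)^{k-1}$, so the conclusion is unaffected.
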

$\mbox{}$
\par
Let $\gamma=\gamma(\veps)$ denote the Lyapunov exponent of the sequence $X_n,$ that is
\beqn
\label{gammaf}
\gamma:=\lim_{n\to\infty} \frac{1}{n}\log X_n=\lim_{n\to\infty} \frac{1}{n}E_P(\log X_n)=E_Q\Bigl(\log \frac{1}{R_1}\Bigr),
\quad P-\as~\mbox{\rm and}~Q-\as
\feqn
The existence of the limit along with the identities follow from results in \cite{rmp}.
Taking in account that $Z_0=1,$ \eqref{ordering} implies that
\beqn
\label{gamma}
\gamma&=&-E_Q(\log R_1)=-\sum_{n=1}^\infty \veps(1-\veps)^{n-1}\log S_n=\sum_{n=1}^\infty \veps^2(1-\veps)^{n-1}\log Z_n.
\feqn
The last formula can be compactly written as $\gamma=\veps\cdot E_P(\log Z_T),$ where
\beqn
\label{timet}
T=1+\inf\{k\geq 0: \eta_k=0\}=\inf\{j\geq 1:R_j=1/a\}.
\feqn
It follows from \eqref{gamma} and the fact that $|\log S_n|$ is a bounded sequence,
that $\gamma(\veps)$ is an analytic function of $\veps$ on  $[0,1].$ In particular,
\beqn
\label{bounds}
\lim_{\veps\downarrow 0} \gamma(\veps)=\lim_{n\to\infty} \log S_n=\log \lambda_1
\qquad \mbox{\rm and} \qquad \lim\limits_{\veps\uparrow 1} \gamma(\veps)=a.
\feqn
We remark that the analyticity of $\gamma(\veps)$ on $[0,1)$ follows directly from a general result in \cite{peres}. For recent advances in numerical study of 
the Lyapunov exponent for random Fibonacci sequences see \cite{refer,refer1} and references therein. 
\par
We next prove formally the following intuitive result. Together with \eqref{bounds} it implies the existence of $\veps^*\in (0,1)$
such that $\gamma(\veps)>0$ if and only if $\veps<\veps^*.$ Our interest to this phase transition steams from the result in Lemma~\ref{gsums}
stated below in Section~\ref{proofm}.
\begin{proposition}
\label{renewal}
The function $\gamma(\veps):[0,1]\to\rr$ is strictly decreasing.
\end{proposition}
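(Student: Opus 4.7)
The plan is to fix $0 < \veps_1 < \veps_2 < 1$ and show $\gamma(\veps_1) > \gamma(\veps_2)$ by a coupling argument combined with a strict superadditivity of the deterministic sequence $\log Z_n$; strict monotonicity on the closed interval $[0,1]$ then follows because $\gamma$ is continuous (indeed analytic on $[0,1]$, as noted just after \eqref{gamma}). First I would introduce the standard monotone coupling: let $(U_n)_{n\geq 0}$ be i.\,i.\,d.\ Uniform$[0,1]$ and set $\eta_n^{(\veps)} := \mathbf{1}\{U_n > \veps\}$ for every $\veps\in[0,1]$. Then $\eta_n^{(\veps_1)}\geq \eta_n^{(\veps_2)}$ deterministically, and induction on $n$ yields $X_n^{(\veps_1)}\geq X_n^{(\veps_2)}$, whence already $\gamma(\veps_1)\geq \gamma(\veps_2)$. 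Crucially, the $\veps_1$-reset times $\{n:\eta_n^{(\veps_1)}=0\}$ form a Bernoulli thinning (with retention probability $\veps_1/\veps_2$) of the $\veps_2$-reset times, so a single $\veps_1$-cycle (the interval between consecutive $\veps_1$-resets) decomposes into a concatenation of $M$ consecutive $\veps_2$-cycles, where $M$ is geometric on $\{1,2,\ldots\}$ with parameter $\veps_1/\veps_2$ and the component lengths $T_1,\ldots,T_M$ are i.\,i.\,d.\ geometric with parameter $\veps_2$, independent of $M$.

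The key algebraic input is the identity
\[
Z_{m+n} \;=\; Z_m Z_n + b\,Z_{m-1}Z_{n-1}, \qquad m,n\geq 1,
\]
which I would prove by induction on $n$: the case $n=1$ is exactly $Z_{m+1}=aZ_m+bZ_{m-1}$, and the induction step combines $Z_{m+n+1}=aZ_{m+n}+bZ_{m+n-1}$ with the inductive hypothesis applied at $n$ and $n-1$. Since $bZ_{m-1}Z_{n-1}>0$, this gives the strict super-multiplicativity $Z_{m+n}>Z_m Z_n$, and iterating yields $\log Z_{T_1+\cdots+T_M} > \sum_{i=1}^M\log Z_{T_i}$ whenever $M\geq 2$.

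To conclude, I would use the renewal-reward representation implicit in \eqref{gamma}: during a single cycle of length $T$ the ratio chain $R_n$ visits $S_1,S_2,\ldots,S_T$ in order, so the telescoping product of the $R_n$ multiplies $X$ by exactly $Z_T$ across the cycle. Consequently, over a single $\veps_1$-cycle of length $S:=T_1+\cdots+T_M$, the log-growth of $X^{(\veps_1)}$ is $\log Z_S$, whereas the log-growth of $X^{(\veps_2)}$ (summing the contributions of the $M$ nested $\veps_2$-cycles) is $\sum_{i=1}^M\log Z_{T_i}$. The $\veps_1$-cycles being i.\,i.\,d.\ with $E[S]=1/\veps_1$, the strong law applied at the (random) successive $\veps_1$-reset times, combined with \eqref{gammaf}, gives
\[
\gamma(\veps_1) - \gamma(\veps_2) \;=\; \veps_1\, E\!\left[\,\log Z_S - \sum_{i=1}^M \log Z_{T_i}\,\right] \;>\; 0,
\]
with strict inequality because $P(M\geq 2)=1-\veps_1/\veps_2>0$.

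The main obstacle I expect is recognizing the identity for $Z_{m+n}$: without a product-type identity of this flavour, the pointwise domination $X_n^{(\veps_1)}\geq X_n^{(\veps_2)}$ yields only weak monotonicity, because the ratios $Z_n/Z_{n-1}$ oscillate around $\lambda_1$ (cf.\ \eqref{ordering}) and so no direct comparison based on cycle length alone can distinguish one long cycle from a concatenation of shorter ones. The strict super-multiplicativity of $Z_n$ is precisely what turns the ``merging'' of several $\veps_2$-cycles into a single $\veps_1$-cycle into a strictly positive log-growth gain.
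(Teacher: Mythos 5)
Your proof is correct but takes a genuinely different route from the paper's. Both arguments start from a monotone coupling (yours via a common uniform sequence $U_n$, the paper's via an auxiliary Bernoulli thinning $\eta_n^{(1)}=\min\{\eta_n,\xi_n\}$), and both get the pointwise domination $X_n^{(\veps_1)}\ge X_n^{(\veps_2)}$ and hence weak monotonicity. To upgrade to strict monotonicity, the paper works pathwise: at each discrepancy time $\zeta_k=1$ the lower-noise chain picks up an extra additive term, which via the ratio bounds in \eqref{ordering} contributes a multiplicative gain of at least $1+b/(a^2+b)$; the LLN then gives the explicit bound $\gamma(\veps)-\gamma(\veps_1)\ge(\veps_1-\veps)\log\bigl(1+\tfrac{b}{a^2+b}\bigr)$. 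You instead exploit the regeneration structure and the Fibonacci-type convolution identity $Z_{m+n}=Z_mZ_n+bZ_{m-1}Z_{n-1}$ (a correct identity, easily checked either by induction or from \eqref{ze} together with $\lambda_1\lambda_2=-b$), which gives strict super-multiplicativity of $Z$. Since the $\veps_1$-resets are a $\veps_1/\veps_2$-thinning of the $\veps_2$-resets, each $\veps_1$-cycle is a merger of a geometric number $M$ of $\veps_2$-cycles, and a renewal--reward computation with Wald's identity yields $\gamma(\veps_1)-\gamma(\veps_2)=\veps_1\,E\bigl[\log Z_S-\sum_{i=1}^M\log Z_{T_i}\bigr]>0$, the positivity coming from $P(M\ge 2)>0$. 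Both approaches are correct. The paper's is slightly more direct and produces a cleaner explicit quantitative lower bound; yours isolates a pleasing structural reason for the strict gap (merging regeneration blocks is strictly profitable because of a classical Fibonacci identity), at the cost of invoking the renewal--reward formalism. Your appeal to continuity of $\gamma$ to pass from $(0,1)$ to $[0,1]$ is legitimate since the analyticity noted after \eqref{gamma} does not rely on this proposition.
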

\begin{proof}
The proof is by a coupling argument. Fix any $\veps\in [0,1)$ and $\veps_1\in (\veps,1].$ Let $(X_n)_{n\geq 0}$ be the sequence
introduced in \eqref{xdef}, and define $(X^{(1)}_n)_{n\geq 0}$ as follows: $X^{(1)}_0=1,$ $X^{(1)}_1=a,$ and
\beq
X^{(1)}_{n+1}=aX^{(1)}_n+b\eta^{(1)}_{n-1}X^{(1)}_{n-1},\qquad n\in\nn,
\feq
where $\eta^{(1)}_n=\min\{\eta_n,\xi_n\}$ and $\xi_n$ are i.\,i.\,d. random variables with the distribution
\beq
\xi_n=\left\{
\begin{array}{lll}
0&\mbox{with probability}&\frac{\veps_1-\veps}{1-\veps}\\
1&\mbox{with probability}&\frac{1-\veps_1}{1-\veps},
\end{array}
\right.
\feq
such that $\xi_n$ is independent of the $\sigma$-algebra $\sigma(X_0,\eta_0,X_1,\eta_1,\ldots,X_{n-1},\eta_{n-1},X_n,X_{n+1})$ for all $n\geq 0.$
Then $P\bigl(\eta^{(1)}_n=0\bigr)=\veps_1,$ $P\bigl(\eta^{(1)}_n=1\bigr)=1-\veps_1,$ and hence the sequence $(X^{(1)}_n)_{n\geq 0}$ is distributed according to the same
law as $(X_n)_{n\geq 0}$ with $\veps_1$ replacing $\veps$ in the definition of $\eta_n.$ To deduce that $\gamma(\veps)$ is a non-increasing function of $\veps,$ observe that by the coupling construction, $\eta_n\geq \eta_n^{(1)}$ for all $n\geq 0,$ and hence, by induction, $X_n\geq X^{(1)}_n$ for all $n\geq 0.$
\par
To conclude the proof of the proposition it remains to show that $\gamma(\veps)$ is strictly decreasing. Toward this end, first observe that for any integer $n\geq 2$ we have
\beq
\zeta_n:=\one\bigl(\eta^{(1)}_n\neq \eta_n\bigr)=\one\bigl(\eta^{(1)}_n=0, \eta_n=1\bigr)=\one\bigl(\eta_n=1,\xi_n=0\bigr),
\feq
where $\one(A)$ denotes the indicator function of the event $A$ and the first equality serves as a definition of $\zeta_n.$ Then, the following is an implication of Lemma~\ref{ratios} and \eqref{ordering} along with the fact (which we have established) that $X_n\geq X^{(1)}_n$ for all $n\geq 0:$
\beqn
\nonumber
\log X_n-\log X^{(1)}_n&=&\log \frac{X_n}{X^{(1)}_n}\geq \log\left[\prod_{k=0}^{n-2} \Bigl(\frac{aX_{n-1}+bX_{n-2}}{aX^{(1)}_{n-1}}\Bigr)^{\zeta_k}\right]
\\
\nonumber
&\geq&
\log \left[\prod_{k=0}^{n-2} \Bigl(\frac{aX_{n-1}+bX_{n-2}}{aX_{n-1}}\Bigr)^{\zeta_k}\right]\geq \log \left[\prod_{k=0}^{n-2} \Bigl(1+\frac{b}{a^2+b}\Bigr)^{\zeta_k}\right]
\\
\label{couplee}
&\geq&
\sum_{k=0}^{n-2} \zeta_k \cdot \log \Bigl(1+\frac{b}{a^2+b}\Bigr).
\feqn
It follows then from \eqref{gammaf} and the law of large numbers that with probability one,
\beq
\gamma(\veps)-\gamma(\veps_1)&=&\lim_{n\to\infty} \frac{1}{n} \bigl(\log X_n-\log X^{(1)}_n\bigr)
\geq\lim_{n\to\infty} \frac{1}{n}\sum_{k=0}^{n-2} \zeta_n \cdot \log \Bigl(1+\frac{b}{a^2+b}\Bigr)
\\
&=& E_P(\zeta_n)\cdot \log \Bigl(1+\frac{b}{a^2+b}\Bigr)
=
(\veps_1-\veps) \cdot \log \Bigl(1+\frac{b}{a^2+b}\Bigr)>0.
\feq
The proof of the proposition is complete.
\end{proof}
\section{Proof of the main result}
\label{proofm}
The purpose of this section is to prove Theorem~\ref{main}. The proof is divided into a sequence of lemmas.
The critical exponent $\veps^*$ is identified and part {\em (a)-(i)} and {\em (a)-(ii)} of the theorem are proved in Proposition~\ref{prop}.
The assertion in part {\em (a)-(iii)} of the theorem is verified in Lemma~\ref{stablef}. Finally, the claim in part {\em (b)} is established in Proposition~\ref{prop3}.
\par
Observe that under the stationary law $Q$ the random variable $W_n=\sum_{k=0}^{n-1} \prod_{j=k}^{n-1} R_j$ has the same distribution
as $\sum_{k=0}^{n-1} \prod_{j=0}^k R_{-j}.$ Therefore one can write $W^{(\veps)}=\sum_{k=0}^\infty \prod_{j=0}^k R_{-j}.$ The following lemma is well-known, see for instance Theorem~2.1.2 (especially display (2.1.6)) and the subsequent Remark in \cite{notes}.
\begin{lemma}
\label{gsums}
For any $\veps\in(0,1)$ we have $P(W^{(\veps)}<\infty)=Q(W^{(\veps)}<\infty)\in\{0,1\}.$ Moreover, $P(W^{(\veps)}<\infty)=1$ if and only if $\gamma(\veps)=-E_Q(\log R_1)>0.$
\end{lemma}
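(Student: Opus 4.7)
My plan is to treat $W^{(\veps)}$ as a random perpetuity $\sum_{k\geq 0}\prod_{j=0}^k R_{-j}$ under the stationary law $Q$, where all terms are positive, and to apply a Birkhoff ergodic argument to $\log$ of the general term. Since the Markov chain $R_n$ has already been shown to be irreducible (it takes values in $\{S_k:k\in\nn\}$ with all entries reachable via the renewal times $T_n$), the stationary law $Q$ is ergodic. The event $\{W^{(\veps)}<\infty\}$ is measurable with respect to $\sigma(R_{-j}:j\geq 0)$ and, more importantly, is invariant under the shift of the stationary sequence (changing finitely many $R_{-j}$ only perturbs a finite partial sum), so by ergodicity its $Q$-probability is either $0$ or $1$.

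To pass from $Q$ to $P$, I would use the irreducibility and aperiodicity of the chain $R_n$ on its countable state space: under $P$, starting from $R_0=1/a$, the chain converges in total variation to its stationary distribution, so for any tail/shift-invariant event the $P$- and $Q$-probabilities coincide. Concretely, $\{W^{(\veps)}<\infty\}$ can be written using $(R_{n},R_{n+1},\ldots)$ for arbitrarily large $n$ (by discarding a finite number of initial terms from the perpetuity and absorbing the prefactor into a finite random constant), which gives $P(W^{(\veps)}<\infty)=Q(W^{(\veps)}<\infty)\in\{0,1\}$.

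For the characterization via $\gamma(\veps)$, write $L_k:=\sum_{j=0}^k \log R_{-j}$, so that the $k$-th term of the perpetuity equals $e^{L_k}$. Under $Q$ the sequence $(\log R_{-j})_{j\geq 0}$ is stationary and ergodic with finite mean $E_Q(\log R_1)=-\gamma(\veps)$ (finite by Lemma~\ref{ratios} and \eqref{ordering}, since the values $\log S_k$ are bounded above by $\log(1/a)$ and bounded below by $-\log\lambda_1$ in the limit). Birkhoff's ergodic theorem gives $L_k/k\to -\gamma(\veps)$ $Q$-a.s. If $\gamma(\veps)>0$, then $L_k\leq -\gamma(\veps)k/2$ eventually, whence $e^{L_k}$ decays at least geometrically and $W^{(\veps)}<\infty$ a.s. If $\gamma(\veps)<0$, then $e^{L_k}\to\infty$ and the series diverges. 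The borderline $\gamma(\veps)=0$ is the only delicate case: here one invokes the Chung--Fuchs / Kesten--Spitzer--Whitman type dichotomy that the partial sums of a nondegenerate stationary ergodic sequence with mean zero satisfy $\limsup L_k=+\infty$ a.s., which again prevents $e^{L_k}$ from being summable. (Nondegeneracy of $\log R_1$ under $Q$ follows from $\veps\in(0,1)$ and strict monotonicity in \eqref{ordering}.)

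The main obstacle I expect is precisely this $\gamma(\veps)=0$ case, where one cannot conclude directly from the ergodic theorem; one has to argue that the stationary ergodic increments produce partial sums $L_k$ whose positive excursions are large enough infinitely often to rule out summability of $e^{L_k}$. This is the content of the classical result cited from \cite{notes} (Theorem~2.1.2 therein), and invoking it cleanly, after having reduced the problem as above, is the quickest path to finishing the proof. Everything else in the lemma—the $0/1$ dichotomy, the equivalence of $P$- and $Q$-probabilities, and the easy convergence/divergence in the cases $\gamma\neq 0$—is routine once the ergodicity of the stationary chain and the irreducibility argument for coupling $P$ and $Q$ are in place.
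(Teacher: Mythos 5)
The paper does not actually prove this lemma; it cites Theorem~2.1.2 (and the remark following display~(2.1.6)) in Zeitouni's St.-Flour notes, which assert precisely this zero-one law and convergence dichotomy for perpetuities $\sum_{m\geq 0}\prod_{j=0}^m R_{-j}$ driven by a stationary ergodic sequence. Your proposal reconstructs that cited result rather than invoking it, which is a legitimate, more self-contained route, and your overall architecture (ergodicity gives the $Q$ zero-one law; Birkhoff handles $\gamma\neq 0$; a recurrence theorem handles $\gamma=0$) is the right one.

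A few points need tightening, however. First, the shift-invariance of $\{W^{(\veps)}<\infty\}$ does not follow from ``changing finitely many $R_{-j}$ only perturbs a finite partial sum'' -- that is neither what the shift does nor literally true, since dropping $R_0$ rescales every term. The correct and short justification is the recursion $W^{(\veps)}=R_0\bigl(1+\theta W^{(\veps)}\bigr)$ with $R_0\in(0,\infty)$, which gives $\{W^{(\veps)}<\infty\}=\{\theta W^{(\veps)}<\infty\}$ directly. Second, for passing from $Q$ to $P$, total-variation convergence of the forward chain is not by itself enough (the event lives on a doubly-infinite trajectory, and under $P$ you only have a forward chain); the cleaner route is to observe that $P$ is exactly $Q$ conditioned on $\{R_0=1/a\}$, an event of positive $Q$-probability $\veps$ (Lemma~\ref{ratios}), so the $Q$-zero-one law forces the conditional probability to coincide with the unconditional one. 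Third, in the borderline case $\gamma(\veps)=0$ the relevant theorem in the stationary ergodic (as opposed to i.i.d.) setting is Atkinson's recurrence theorem for integrable mean-zero cocycles, not Chung--Fuchs or Kesten--Spitzer--Whitman; moreover, that theorem requires no nondegeneracy hypothesis, so your parenthetical about nondegeneracy of $\log R_1$ is superfluous -- even in the coboundary case the partial sums $L_k$ stay bounded, $e^{L_k}$ does not tend to zero, and the series diverges anyway.
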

Let $s^*=\sup\{\veps>0: \gamma(\veps)>0\}.$ It follows from Proposition~\ref{renewal}, the limits in \eqref{bounds}, and the continuity of $\gamma(\veps)$ that
\beqn
\label{star}
\veps^*\in(0,1)\qquad \mbox{\rm and} \qquad \gamma(\veps^*)=0.
\feqn
By virtue of Lemma~\ref{gsums}, $\veps^*$ satisfies {\em (a)-(i)} and {\em (a)-(ii)} in the statement of Theorem~\ref{main}. We proceed
with the proof that {\em (a)-(iii)} of the theorem also holds true.
\par
Following \cite{stable, mrec}, we are going to identify the critical exponent $s_\veps$ in the statement of Theorem~\ref{main} as the unique solution to the equation $\Lambda_\veps(s_\veps)=0,$ where for $t\geq 0$ we define
\beq
\Lambda_\veps(t):=\lim_{n\to\infty} \frac{1}{n}\log E_Q\bigl(R_1^t\ldots R_n^t).
\feq
It follows from Lemmas~2.6 and~2.8(a) in \cite{stable} (see especially display (2.11) in \cite{stable}) applied to the forward Markov chain $(R_n)_{n\geq 0}$ that the above limit exists and in fact is not affected by the initial distribution of the Markov chain. In particular we have:
\beqn
\label{istable}
\Lambda_\veps(t)=\lim_{n\to\infty} \frac{1}{n}\log E_P\Bigl(\frac{X_0^t}{X_{n-1}^t}\Bigr)=\lim_{n\to\infty} \frac{1}{n}\log E_P\Bigl(\frac{1}{X_n^t}\Bigr)
\feqn
The following proposition is a key ingredient in the proof of Theorem~\ref{main}. Note that for any $\veps\in[0,1],$ $\Lambda_\veps(0)=0$
and, by virtue of the Cauchy-Schwarz inequality, $\Lambda_\veps(t)$ is convex on $[0,\infty).$ In particular, the one-sided derivative $\Lambda'_\veps(0):=\lim_{t\downarrow 0} \Lambda_\veps(t)/t$ is well-defined.
\begin{proposition}
\label{prop}
Let $\veps^*\in(0,1)$ be defined in \eqref{star}. Then the following three statements are equivalent for $\veps\in (0,1):$
\begin{itemize}
\item [(i)] $\gamma(\veps)>0,$ that is $\veps\in(0,\veps^*).$
\item [(ii)] $\Lambda'_\veps(0)<0.$
\item [(iii)] There exists a unique $s_\veps>0$ such that $\Lambda_\veps(s_\veps)=0.$
\item [(iv)] $W^{(\veps)}$ is a $P$-a.\,s. finite and non-degenerate random variable.
\end{itemize}
\end{proposition}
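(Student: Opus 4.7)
The plan is to prove the equivalence as a chain: (i)~$\Leftrightarrow$~(iv), (i)~$\Leftrightarrow$~(ii), and (ii)~$\Leftrightarrow$~(iii). The first step is essentially Lemma~\ref{gsums}; the second relies on the identity $\Lambda'_\veps(0) = -\gamma(\veps)$; and the third combines the convexity of $\Lambda_\veps$ with a coercivity estimate $\Lambda_\veps(t) \to \infty$ as $t \to \infty$.

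For (i)~$\Leftrightarrow$~(iv), finiteness of $W^{(\veps)}$ is exactly Lemma~\ref{gsums}. For non-degeneracy when $\gamma(\veps) > 0$, I would use the representation $W^{(\veps)} = \sum_{k=0}^{\infty}\prod_{j=0}^{k} R_{-j}$ under $Q$: since by Lemma~\ref{ratios}(b) the variable $R_0$ takes infinitely many distinct values $S_k$ with positive $Q$-probability, $W^{(\veps)}$ cannot reduce to an almost sure constant. Concretely, one may exhibit two positive-probability events (distinguished by the early $\eta$'s) yielding provably different values of the series.

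The crux is (i)~$\Leftrightarrow$~(ii), where I would establish $\Lambda'_\veps(0) = -\gamma(\veps)$. By Lemma~\ref{ratios}(a) and \eqref{ordering}, $\log R_n$ is uniformly bounded; thus for each $n$, the function $g_n(t) := \frac{1}{n}\log E_Q(R_1^t \cdots R_n^t)$ is real-analytic and convex in $t$, with
\begin{equation*}
g'_n(0) = \frac{1}{n}\sum_{k=1}^{n} E_Q(\log R_k) = E_Q(\log R_1) = -\gamma(\veps)
\end{equation*}
by $Q$-stationarity and \eqref{gammaf}. Convexity gives $g_n(t) \geq -t\gamma(\veps)$, which passes to the limit as $\Lambda_\veps(t) \geq -t\gamma(\veps)$ and hence $\Lambda'_\veps(0) \geq -\gamma(\veps)$. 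The reverse inequality would follow from controlling the second-order Taylor coefficient $\frac{1}{2n}\mathrm{Var}_Q(\log R_1 + \cdots + \log R_n)$ uniformly in $n$, using the boundedness of $\log R_n$ together with the regeneration structure of the chain $R_n$ flagged in the introduction. The main obstacle is precisely this interchange of a limit with a one-sided derivative at the boundary point $0$; once it is justified, everything downstream is comparatively routine.

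For (ii)~$\Leftrightarrow$~(iii), coercivity $\Lambda_\veps(t) \to \infty$ comes from restricting to the event $\{\eta_0 = \cdots = \eta_{n-2} = 0\}$ of probability $\veps^{n-1}$, on which induction forces $X_n = a^n$. Hence $E_P(X_n^{-t}) \geq \veps^{n-1} a^{-nt}$, yielding $\Lambda_\veps(t) \geq \log \veps - t\log a$, which tends to $+\infty$ since $a \in (0,1)$. Combined with $\Lambda_\veps(0) = 0$, convexity, and (ii), the intermediate value theorem supplies some $s_\veps > 0$ with $\Lambda_\veps(s_\veps) = 0$; uniqueness follows from strict convexity of $\Lambda_\veps$, which is a standard feature of such Markov-modulated cumulant generating functions (cf.~\cite{stable, mrec}). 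Conversely, (iii) rules out $\Lambda'_\veps(0) \geq 0$: otherwise convexity would yield $\Lambda_\veps \geq 0$ throughout, and the existence of a unique positive zero together with $\Lambda_\veps(0) = 0$ would force $\Lambda_\veps \equiv 0$ on $[0, s_\veps]$, contradicting strict convexity.
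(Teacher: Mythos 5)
The parts of your argument for $(i)\Leftrightarrow(iv)$ and $(ii)\Leftrightarrow(iii)$ line up with the paper: $(i)\Leftrightarrow(iv)$ is Lemma~\ref{gsums}, the coercivity estimate $E_P(X_n^{-t})\geq \veps^{n-1}a^{-(n-1)t}$ is essentially the paper's bound, and convexity plus $\Lambda_\veps(0)=0$ closes $(ii)\Leftrightarrow(iii)$ in both treatments (you are slightly more careful, correctly flagging that strict convexity is really what gives uniqueness of the root, and you also spell out the non-degeneracy part of $(iv)$ that the paper leaves implicit).

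The genuine gap is in $(i)\Rightarrow(ii)$, and you flag it yourself. Your route is to prove the identity $\Lambda'_\veps(0)=-\gamma(\veps)$ outright. The Jensen-type computation $g'_n(0)=E_Q(\log R_1)=-\gamma(\veps)$ and convexity only give the one-sided bound $\Lambda_\veps(t)\geq -t\gamma(\veps)$, i.e.\ $\Lambda'_\veps(0)\geq -\gamma(\veps)$, which proves $(ii)\Rightarrow(i)$ (and this half does match the paper, which also uses Jensen for $(ii)\Rightarrow(i)$). To get $(i)\Rightarrow(ii)$ you need the reverse inequality $\Lambda'_\veps(0)\leq -\gamma(\veps)$, which amounts to exchanging $\lim_{n\to\infty}$ with the one-sided derivative at $t=0$. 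A uniform bound on $\frac{1}{n}\mathrm{Var}_Q\bigl(\sum_{k\leq n}\log R_k\bigr)$ alone is not enough for this; one needs a uniform-in-$n$ second-order Taylor remainder, which in turn rests on a spectral/regeneration argument that you gesture at but do not carry out. The paper bypasses the identity entirely: from $\gamma(\veps)>0$ it computes $E_Q\bigl(\sum_{k=0}^{T-1}\log R_k\bigr)=-\veps^{-1}\gamma(\veps)<0$ for the regeneration time $T$, invokes Kesten--Goldie for the i.i.d.\ block variable $X_T^{-1}$ to produce a unique $s>0$ with $E_P(X_T^{-s})=1$, translates this (via display (2.43) of \cite{stable}) into $\Lambda_\veps(s)=0$, and then reads off $\Lambda'_\veps(0)<0$ from convexity. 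In effect the paper proves $(i)\Rightarrow(iii)$ first and gets $(ii)$ for free, whereas your plan tries to go to $(ii)$ directly and stalls exactly at the hard analytic step. Your approach could be completed by citing the standard differentiability of the Perron--Frobenius eigenvalue of the twisted transfer kernel for a uniformly ergodic chain with bounded $\log R_n$ (which yields $\Lambda'_\veps(0)=-\gamma(\veps)$ exactly), but as written this step is a real gap and not a routine interchange.
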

\begin{proof}
$\mbox{}$
\\
$(i)\Rightarrow (ii)$ If $\gamma(\veps)> 0,$ the ergodic theorem implies that for $T$ defined in \eqref{timet}, $Q$-a.\,s.,
\beq
E_Q\Bigl(\sum_{k=0}^{T-1} \log R_k\Bigr)&=& E_Q(T)\cdot \lim_{n\to\infty} \frac{1}{n}\sum_{k=1}^n \log R_k=E_Q(T)\cdot E_Q(\log R_1)
=
-\veps^{-1}\cdot \gamma(\veps)<0.
\feq
Hence \cite{goldie, kesten} there exists a unique $s>0$ such that $E_P\bigl(X_T^{-s}\bigr)=1.$ It can be shown (see, for instance, display (2.43) in \cite{stable}) that this implies $\Lambda_\veps(s)=0$ and hence $\Lambda'_\veps(0)<0.$
\\
$(ii)\Rightarrow (i)$ Jensen's inequality implies that $\gamma_\veps \geq -t \Lambda_\veps(t),$ and hence $\gamma(\veps)> 0$ if $\Lambda'(0)<0.$
\\
$(ii)\Leftrightarrow (iii)$ For any $t>0,$ we have
\beq
E_P(X_n^{-t})\geq E_P\Bigl(X_n^{-t}\prod_{k=2}^n (1-\eta_k)\Bigr)\geq \veps^{n-1}a^{-t(n-1)},
\feq
and hence $\lim_{t\to\infty} \Lambda_\veps(t)=+\infty.$ Since $\Lambda_\veps(t)$ is a convex function with $\Lambda_\veps(0)=0,$ this proves the implication $(ii)\Leftrightarrow (iii)$ (for an illustration, see Fig.~\ref{figure} below).
\\
$(i)\Leftrightarrow (iv)$ This is the content of Lemma~\ref{gsums}.
\\
The proof of the proposition is complete.
\end{proof}
$\mbox{}$
\\
\begin{SCfigure}[][!ht]
\label{figure}
\begin{tikzpicture}[scale=0.7]
\draw [->] (1,9) -- (15,9)node[below]{$t$};
\draw [->] (3,6) -- (3,16)node[left]{$\Lambda_\veps(t)$};
\draw[domain=3:12] plot (\x,{0.1*(\x-6)^2+8.1});
\draw[domain=3:12] plot (\x,{0.1*(\x-7)^2+7.4});
\draw[domain=3:12] plot (\x,{0.1*(\x-5)^2+8.6});
\draw [dashed] [domain=3:12] plot (\x,{0.08*(\x-3)^2+9.0});
\draw (12,9.95)node[right]{$\veps_1$};
\draw (12,11.8)node[right]{$\veps_2$};
\draw (12,13.55)node[right]{$\veps_3$};
\draw (12,15.6)node[right]{$\veps^*$};
\draw (7.2,9)node[below]{$s_{\veps_3}$};
\draw (9.2,9)node[below]{$s_{\veps_2}$};
\draw (11.2,9)node[below]{$s_{\veps_1}$};
\draw [fill] (7,9) circle [radius=.035];
\draw [fill] (9,9) circle [radius=.035];
\draw [fill] (11,9) circle [radius=.035];
\draw [dashed] (3, 9) -- (4.5,6);
\draw (4.6,6.1)node[right]{\footnotesize $\Lambda_0(t)=-t\log \lambda_1$};
\draw (3,9.9)node[right]{\footnotesize $\Lambda_{\veps^*}'(0)=0$};
\end{tikzpicture}
\caption{\small Sketch of the graph of the convex function $\Lambda_\veps(t)$ for 3 increasing parameter values $\veps_1<\veps_2<\veps_3$ within the range $(0,\veps^*)$ and the extremal parameter values $\veps=0,$ $\veps=\veps^*.$ }
\end{SCfigure}
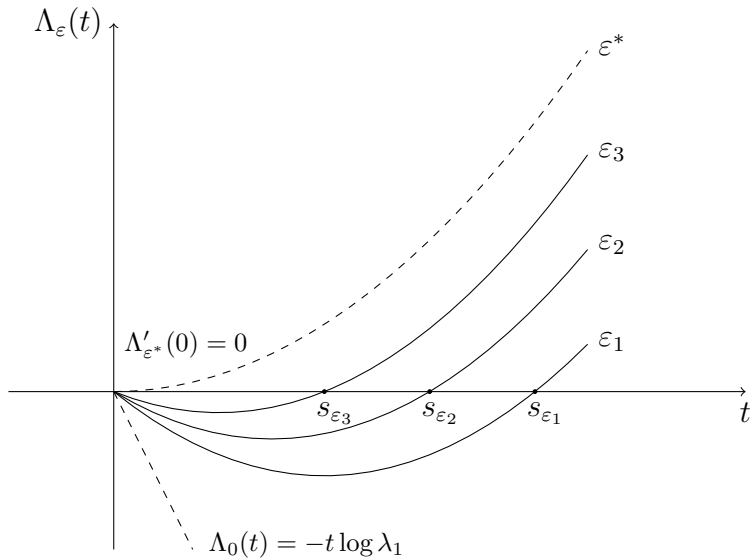
$\mbox{}$
\\
Using the notation introduced in the statement of Lemma~\ref{ratios}, transition kernel of the time-reversed Markov chain $R_{-n}$ on the state space $\{S_i:i\in\nn\}$
can be written as follows:
\beq
H(i,j)&:=&Q(R_n=S_j|R_{n+1}=S_i)=\frac{Q(R_{n+1}=S_i|R_n=S_j)Q(R_n=S_j)}{Q(R_n=S_i)}
\\
&=&
\left\{
\begin{array}{ll}
\veps(1-\veps)^{j-1}&\mbox{\rm if}~i=1\\
1&\mbox{\rm if}~i=j+1\\
0&\mbox{\rm otherwise.}
\end{array}
\right.
\feq
Unfortunately, the infinite matrix $H(i,j)$ doesn't satisfy the conditions imposed in \cite{stable,mrec} or \cite{msee}.
More precisely, the kernel $H$ doesn't satisfy the following strong Doeblin condition: $H^m(i,j) \geq c\mu(j)$ for some $m\in\nn,$ $c>0,$
a probability measure $\mu$ on $\nn,$ and all $i,j\in \nn.$ However, one can exploit the fact that transition kernel of the Markov chain $R_n$ does satisfy Doeblin's condition with $m=1,$ $c=\veps,$ and $\mu=\delta_1,$ the degenerate distribution concentrated on $j=1.$
The proof of the following lemma is a mixture of arguments borrowed from \cite{stable} and \cite{mrec}. The key technical ingredient of the proof
is the observation that transition kernel of the forward Markov chain $R_n$ satisfies Assumption~1.2 in \cite{mrec}.
\begin{lemma}
\label{stablef}
The claim in part (a)-(iii) of Theorem~\ref{main} holds with $\veps^*$ introduced in \eqref{star}.
\end{lemma}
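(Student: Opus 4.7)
The plan is to apply Goldie's implicit renewal theorem after re-expressing $W^{(\veps)}$ as an i.\,i.\,d.\ perpetuity via the regeneration structure of the forward chain $R_n$. The key observation is the one-step Doeblin minorization $P(R_{n+1}\in\cdot\,|\,R_n=r)\geq \veps\,\delta_{S_1}$, valid for every state $r$ (this is exactly Assumption~1.2 of \cite{mrec}); hence the successive return times $0=\tau_0<\tau_1<\ldots$ of $R_n$ to $S_1=1/a$ split its trajectory into i.\,i.\,d.\ excursions with geometric inter-arrival times of parameter $\veps$. Using $X_k/X_m=\prod_{j=k}^{m-1}R_j$, the identity \eqref{win} evaluated at $n=\tau_K$ becomes
\[
W_{\tau_K}\;=\;\sum_{i=0}^{K-1} C_i\prod_{l=i+1}^{K-1} A_l,\qquad A_l:=\frac{X_{\tau_l}}{X_{\tau_{l+1}}},\quad C_i:=\sum_{k=\tau_i}^{\tau_{i+1}-1}\frac{X_k}{X_{\tau_{i+1}}},
\]
and each pair $(A_l,C_l)$ depends only on the $l$-th regeneration block. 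Consequently $\{(A_l,C_l)\}_{l\geq 0}$ is an i.\,i.\,d.\ sequence whose common distribution is that of $(A,C):=\bigl(1/Z_T,\,\sum_{l=0}^{T-1}Z_l/Z_T\bigr)$ with $T$ geometric of parameter $\veps$.

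Reversing the order of summation, which is permissible in distribution by the exchangeability of i.\,i.\,d.\ terms, and then letting $K\to\infty$ via Lemma~\ref{gsums}, I obtain $W^{(\veps)}\stackrel{d}{=}\sum_{k=0}^{\infty}\widetilde C_k\prod_{l=0}^{k-1}\widetilde A_l$ with $(\widetilde A_k,\widetilde C_k)\stackrel{d}{=}(A,C)$ i.\,i.\,d.\ pairs; equivalently, $W^{(\veps)}$ is the unique stationary solution of the affine recursion $\widehat W\stackrel{d}{=}\widetilde C+\widetilde A\,\widehat W'$ with $\widehat W'$ independent of $(\widetilde A,\widetilde C)$ and identically distributed. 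Goldie's theorem \cite{goldie} then delivers $P(W^{(\veps)}>x)\sim K_\veps x^{-s}$ for the unique positive $s$ with $E(A^s)=1$. To identify this $s$ with the $s_\veps$ furnished by Proposition~\ref{prop}(iii), I would combine the regeneration identity $E_P(X_{\tau_K}^{-s})=E(A^s)^K$ with the strong law $\tau_K/K\to 1/\veps$ and the definition \eqref{istable} to obtain the clean formula $\Lambda_\veps(s)=\veps\log E(A^s)$; hence $\Lambda_\veps(s_\veps)=0\iff E(A^{s_\veps})=1$.

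The moment hypotheses of Goldie's theorem are straightforward: the two-sided bound $c_1\lambda_1^n\leq Z_n\leq c_2\lambda_1^n$ (a consequence of the closed form \eqref{ze} and $|\lambda_2|<\lambda_1$) makes $Z_l/Z_T$ bounded by $(c_2/c_1)\lambda_1^{l-T}$ for $0\leq l<T$, so $C$ is in fact \emph{deterministically bounded} by $(c_2/c_1)/(\lambda_1-1)$ and $E(C^s)<\infty$ for every $s>0$; meanwhile $E(A^s\log^+\!A)<\infty$ follows from term-by-term differentiation of the everywhere-convergent Dirichlet-type series $E(A^s)=\sum_{t\geq 1}\veps(1-\veps)^{t-1}Z_t^{-s}$. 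The main obstacle — and the reason the proof must ``mix'' techniques from \cite{stable} and \cite{mrec} — is that the time-reversed chain $(R_{-n})$ that naturally drives the sum defining $W^{(\veps)}$ fails strong Doeblin (from the state $S_n$ it traverses $n-1$ deterministic steps before returning to $S_1$), so neither result applies off the shelf; the forward-chain workaround above sidesteps this, but then non-arithmeticity of $\log A=-\log Z_T$ must be verified by hand, i.\,e.\ one must show that $\{\log Z_t:t\in\nn\}$ does not lie in a single coset of a discrete subgroup of $\rr$. I would derive this from $Z_t=c_1\lambda_1^t+c_2\lambda_2^t$ with $c_2\neq 0$ and $|\lambda_2|<\lambda_1$, which forces the increments $\log Z_{t+1}-\log Z_t$ to converge to $\log\lambda_1$ without ever being constant, ruling out the lattice case. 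Non-degeneracy of $W^{(\veps)}$ from Proposition~\ref{prop}(iv) then gives $K_\veps\in(0,\infty)$ via Goldie's explicit formula.
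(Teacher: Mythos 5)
Your proposal follows the same route as the paper: cut the trajectory into i.i.d.\ regeneration blocks at visits of the forward chain $R_n$ to the state $1/a$, re-express the limit $W^{(\veps)}$ as the stationary solution of an i.i.d.\ affine recursion, and invoke the Goldie--Kesten theorem. The genuinely different part is the verification of the moment hypotheses, and there your route is noticeably cleaner. The paper works through the operator machinery of \cite{mrec} --- Perron eigenfunctions of the twisted kernels $\widetilde H_s$ and $\Theta_s$, normalization $f_s(1)=1$, spectral radii on a neighbourhood $(0,\widetilde s_\veps)$ --- to deliver $E_P(B_0^{s_\veps})=1$ and $E_P(A_0^s)<\infty$. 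You sidestep all of this by observing that the multiplicative increment $A=1/Z_T$ and the additive increment $C=\sum_{l<T}Z_l/Z_T$ are \emph{deterministically bounded}: $A\le 1/\inf_t Z_t$ and, via the two-sided bound $Z_n\asymp\lambda_1^n$ coming from \eqref{ze}, $C\le (c_2/c_1)/(\lambda_1-1)$. This makes $E(C^s)<\infty$ and $E(A^s\log^+ A)<\infty$ automatic for every $s>0$ and is a real simplification. Your non-arithmeticity argument (the increments $\log Z_{t+1}-\log Z_t$ converge to $\log\lambda_1$ without ever being constant, so $\{\log Z_t\}$ cannot sit in a discrete subgroup) is sound and more self-contained than the paper's appeal to \eqref{ze}.

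There is one step you should repair. The identity $\Lambda_\veps(s)=\veps\log E(A^s)$ is not correct away from the zero: the exact relation implied by the block decomposition (and by the Perron--Frobenius/renewal characterization of $\Lambda_\veps$) is $E\bigl(A^s\,e^{-\Lambda_\veps(s)T}\bigr)=1$, of which your formula is the special case $\Lambda_\veps(s)=0$. Deducing a linear-in-$\veps$ formula from the almost-sure law $\tau_K/K\to1/\veps$ conflates an a.s.\ growth rate with the exponential growth rate of an \emph{expectation}, and would not survive scrutiny as written. What you actually need and use --- namely $\Lambda_\veps(s_\veps)=0\iff E(A^{s_\veps})=1$ --- is correct, but justify it directly: either cite the renewal argument the paper points to (display (2.43) of \cite{stable}), or note that $E(A^s)<1$ forces $\Lambda_\veps(s)<0$ and $E(A^s)>1$ forces $\Lambda_\veps(s)>0$, so by continuity and convexity of $\Lambda_\veps$ the roots coincide. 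You should also spell out why $W_{\tau_K}\stackrel{d}{\to}W^{(\veps)}$ rather than to the conditional law of $W^{(\veps)}$ given the chain is at $1/a$; the saving grace here is that $\{R_n=1/a\}=\{\eta_{n-1}=0\}$ is independent of $W_n$, so sampling at regeneration times does not bias the limit. With these two points tightened, the argument is complete.
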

\begin{proof}
Let
$N_0=0$ and then for $i\in\nn,$
\beq
N_i=\sup\{k<N_{i-1}: R_{-k}=1/a\}.
\feq
Note that the blocks $(R_{_{N_{i+1}+1}},\ldots,R_{_{N_i}})$ are independent and identically distributed for $i\geq 0.$ For $i \geq 0,$ let
\beq
A_i&=&R_{_{N_i}}+R_{_{N_i}}R_{_{N_i-1}}+...+
 R_{_{N_i}}R_{_{N_i-1}} \ldots R_{_{N_{i+1}+2}}R_{_{N_{i+1}+1}}
\\
B_i&=& R_{_{N_i}}R_{_{N_i-1}} \ldots R_{_{N_{i+1}+1}}.
\feq
The pairs $(A_i,B_i),$ $i\geq 0,$ are independent and identically distributed under the law $P.$ Moreover, it follows from \eqref{recur} that
\beq
W^{(\veps)}=A_0+\sum_{n=1}^\infty A_n \prod_{i=0}^{n-1} B_i.
\feq
To prove Lemma~\ref{stablef} we will verify the conditions of the following Kesten's theorem for $(A_i,B_i)_{i\geq 0}$ under the law $P.$
To enable a further reference (see Section~\ref{remarks} below) we quote this theorem in a more general setting (with not necessarily strictly positive coefficients $A_n,$ $B_n$)
than we actually need for the purpose of proving Lemma~\ref{stablef}.
\begin{theorem} \cite{goldie, kesten}
\label{wase}
Let $(A_i,B_i)_{i \geq 0}$ be i.i.d. pairs of real-valued random variables such that
\item[(i)] For some $s>0,$ $E\bigl(|A_0|^s\bigr)=1$ and $E\bigl(|B_0|^s \log^+
|B_0|\bigr)<\infty,$ where $\log^+ x:=\max\{\log x,0\}.$
\item[(ii)] $P(\log |B_0|=\delta \cdot k~\mbox{\rm for some}~k\in\zz|B_0 \neq 0)<1$ for all $\delta>0.$
\par
Let $W=A_0+\sum_{n=1}^\infty A_n \prod_{i=1}^{n-1} B_i.$ Then
\begin{itemize}
\item [(a)]
$\lim\limits_{t \to \infty} t^s P(W>t\bigr)=K_+,$
$\lim\limits_{t \to \infty} t^s P(W < -t\bigr)=K_-$ for some $K_+, K_-\geq 0.$
\item [(b)]
If $P(B_1<0)>0,$ then $K_+=K_-.$
\item [(c)] $K_++K_->0$ if and only if $P\bigl(A_0=(1-B_0)c\bigr)<1$ for all $c \in \rr.$
\end{itemize}
\end{theorem}
Recall $T$ from \eqref{timet}. Observe that $\log B_0=\sum_{k=N_1+1}^0\log R_k$ is
distributed the same as $\sum_{k=1}^T \log R_k=\log Z_1-\log Z_{T+1}.$ Therefore the non-lattice condition {\em (ii)} of the above theorem holds in virtue of \eqref{ze}.
Furthermore, since clearly $P(B_0>1)>0$ and $P(B_0<1)>0,$ we have $P\bigl(A_0=(1-B_0)c\bigr)<1$ for all $c \in \rr.$
\par
It remains to verify condition {\em (i)} of the theorem. Recall $S_k$ introduced in the statement of Lemma~\ref{ratios}. Let
\beq
\witi H(i,j)&:=&Q(R_{n+1}=S_j|R_n=S_i)=
\left\{
\begin{array}{ll}
\veps&\mbox{\rm if}~j=1\\
1-\veps&\mbox{\rm if}~j=i+1\\
0&\mbox{\rm otherwise}
\end{array}
\right.
\feq
be transition kernel of the stationary Markov chain $R_n.$ Between two successive regeneration times $N_i$ the forward chain $R_n$ evolves according to a sub-Markov kernel $\Theta$ given by the equation
\beqn
\label{theta}
\witi H(i,j)=\Theta(i,j)+\veps \one(j=1).
\feqn
That is, for $i,j\in\nn,$
\beq
\Theta(i,j)=Q(R_1=j,N_1>1|R_0=i)=
\left\{
\begin{array}{ll}
1-\veps&\mbox{\rm if}~j=i+1\\
0&\mbox{\rm otherwise.}
\end{array}
\right.
\feq
Further, for any real $s\geq 0$ define the kernels (countable matrices) $\witi H_s(i,j)$ and $\Theta_s(i,j),$ $i,j\in\nn,$ by setting
$\witi H_s(i,j)=\witi H(i,j)R_j^t$ and $\Theta_s(i,j)=\Theta(i,j)R_j^t.$ For an infinite matrix $A$ on $\nn$ and a function $f:\nn\to \rr$
let $Af$ denote the real-valued function on $\nn$ with $(Af)(i):=\sum_{j\in\nn}A(i,j)f(j).$ Since the forward transition kernel $\witi H$ satisfies Assumption~1.2 in \cite{mrec}, it follows that from Proposition~2.4 in \cite{mrec} that for all $s\geq 0:$
\begin{enumerate}
\item There exist a real number $\alpha_s>0$ and a bounded function $f_s:\nn\to\rr$ such that $\inf_{i\in\nn} f_s(i)>0$ and $\witi H_sf=\alpha_sf_s.$
\item There exist a real number $\beta_s>0$ and a bounded function $g_s:\nn\to\rr$ such that $\inf_{i\in\nn} g_s(i)>0$ and $\Theta_sf=\beta_sf_s.$
\item $\beta_s\in(0,\alpha_s).$
\end{enumerate}
Without loss of generality we can use the following normalization for the eigenfunctions:
\beqn
\label{normal}
f_s(1)=1.
\feqn
Furthermore, it follows from Lemma~2.3 in \cite{mrec} that $\alpha_s$ and $\beta_s$ are spectral norms of infinite matrices $\witi H_s$ and $\Theta_s,$ respectively, and hence are uniquely defined. It follows from Proposition~\ref{prop} (see Lemma~2.3 in \cite{mrec}) that $\alpha_{s_\veps}=1.$ In particular, since $\Lambda_\veps$ is a continuous function of $s,$ the spectral radius of $\Theta_s$ (regarded as an operator acting on the space of bounded function on $\nn$ equipped with the sup-norm) is strictly less than one on an interval
$\bigl(0,\witi s_\veps\bigr)$ for some $\witi s_\veps>s_\veps.$ Let $I$ be the infinite unit matrix in $\nn$ and $h:\nn\to\rr$ be a function defined by $h(i)=1$ for all $i\in\nn.$ For any $s\in \bigl(0,\witi s_\veps\bigr),$ and in particular for $s=s_\veps,$ we have:
\beq
E_P(B_0^s)&=&E_P\left( \prod_{k=1}^T R_k\right)=E_P\left( \prod_{k=0}^{T-1} R_k\right)=E_P\bigl[\witi H_s^T(1,1)\bigr]
\\
&=&
\sum_{n=1}^\infty a^{-s}\veps r\Theta_s^{n-1}h(1)=a^{-s}\veps (I-\Theta_s)^{-1}h(1).
\feq
On the other hand, it follows from \eqref{theta} that for any $i\in\nn$ we have
\beq
f_{s_\veps}(i)=\witi H_{s_\veps}f_{s_\veps}(i) = \Theta_{s_\veps} f_{s_\veps}(i)+\veps a^{-{s_\veps}}f_{s_\veps}(1),
\feq
and hence $E_P(B_0^{s_\veps})=f_{s_\veps}(1)=1,$ where for the second identity we used \eqref{normal}.
\par
Finally, adapting (2.45) in \cite{stable} to our framework we obtain for any $s\in \bigl(s_\veps,\witi s_\veps\bigr),$
\beq
E_P(A_0^s)&=& E_P\left[ \left(\sum_{n=1}^\infty \sum_{i=1}^n\prod_{j=0}^{i-1} R_{-j}\cdot \one(N_1=-n) \right)^s \right]
\\
&=&\sum_{n=1}^\infty E_P\left[ \left(\sum_{i=1}^n\prod_{j=0}^{i-1} R_{-j}\cdot \one(N_1=-n) \right)^s \right]
\\
&\leq&
\sum_{n=1}^\infty n^s \sum_{i=1}^n E_P\left[\prod_{j=0}^{i-1} R_{-j}^s\cdot \one(N_1= -n) \right]
\\
&=&
\sum_{n=1}^\infty n^s \sum_{i=1}^n \frac{1}{Q(R_0=1/a)}\cdot E_Q\left[\prod_{j=0}^{i-1} R_{-j}^s\cdot \one(N_1= -n,R_0=1/a) \right]
\\
&=&
\sum_{n=1}^\infty n^s \sum_{i=1}^n E_P\left[\prod_{j=n-(i-1)}^n R_j^s\cdot \one(T=n) \right]=\veps a^{-s}\sum_{n=1}^\infty n^s \sum_{i=1}^n \Theta^{n-i}\Theta_s^{i-1}h(1)<\infty,
\feq
where the last inequality is an implication of the fact that the spectral radius of the infinite positive matrix $\Theta_s$ is strictly less than one.
The proof of the lemma is complete.
\end{proof}
\begin{remark}
\label{essay}
The initial conditions $X_0=1$ and $X_1=a$ guarantee that the measure $P$ is $Q$ conditioned on the event $R_0=1/a.$ If $R_0$ has a different value, then $A_0$ and $B_0$ defined above
are still independent of the i.\,i.\,d. sequence of pairs $(A_n,B_n)_{n\in\nn},$ but the distributions of the pairs $(A_0,B_0)$ and $(A_1,B_1)$ differ in general. Using a slightly more elaborated version of the arguments used in the proof of Lemma~\ref{stablef} (cf. proof of Proposition~2.38 under assumption (1.6) in \cite{stable}) it can be shown that all the conclusions of Theorem~\ref{main} remain valid for different strictly positive initial values $(X_0,X_1)$ and that the only effect of changing initial conditions is on the value of the constant $K_\veps.$
\end{remark}
To conclude the proof of Theorem~\ref{main} it remains to prove the claim in part {\em (b)} of the theorem. Using the representation of $\Lambda_\veps(t)$ given in \eqref{istable} and a
variation of the coupling argument which we employed in order to prove Proposition~\ref{renewal}, we first derive the following auxiliary result:
\begin{lemma}
\label{lambda-s}
For any fixed $t>0,$ $\Lambda_\veps(t)$ is a strictly increasing function of $\veps$ on $[0,1].$
\end{lemma}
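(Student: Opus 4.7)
The plan is to refine the coupling used in the proof of Proposition~\ref{renewal} in a way that exposes an exponential-in-$n$ gap between $E_P(X_n^{-t})$ at the two parameter values. Fix $0\leq\veps<\veps_1\leq 1$ and couple $X_n$ (driven by $\eta_n,$ with parameter $\veps$) with $X_n^{(1)}$ (driven by $\eta_n^{(1)}=\min\{\eta_n,\xi_n\},$ with parameter $\veps_1$) exactly as in the proof of Proposition~\ref{renewal}. Exponentiating the chain of inequalities leading to \eqref{couplee} produces the multiplicative form
\beq
X_n\geq c^{M_n}X_n^{(1)},\qquad c:=1+\frac{b}{a^2+b}>1,\qquad M_n:=\sum_{k=0}^{n-2}\zeta_k,
\feq
and hence, raising to the power $-t$ and taking $E_P,$
\beq
E_P\bigl(X_n^{-t}\bigr)\leq E_P\bigl[c^{-tM_n}X_n^{(1)-t}\bigr].
\feq

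Next I would integrate out the $\zeta_k$'s conditionally on the $\sigma$-algebra $\calf_n:=\sigma(\eta_0^{(1)},\ldots,\eta_{n-2}^{(1)})$ driving $X_n^{(1)}.$ The coupling construction implies that given $\calf_n$ the indicators $\zeta_k$ are conditionally independent, with $\zeta_k\equiv 0$ on $\{\eta_k^{(1)}=1\}$ and $P\bigl(\zeta_k=1\,\big|\,\eta_k^{(1)}=0\bigr)=(\veps_1-\veps)/\veps_1.$ A direct computation then gives
\beq
E_P\bigl[c^{-tM_n}\,\big|\,\calf_n\bigr]=\theta^{L_n},\qquad \theta:=1-\frac{\veps_1-\veps}{\veps_1}\bigl(1-c^{-t}\bigr)\in(0,1),
\feq
where $L_n:=\#\{0\leq k\leq n-2:\eta_k^{(1)}=0\},$ so that $E_P\bigl(X_n^{-t}\bigr)\leq E_P\bigl[X_n^{(1)-t}\theta^{L_n}\bigr].$

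The crucial step is then to apply Harris' (FKG) inequality under the Bernoulli product law of $(\eta_0^{(1)},\ldots,\eta_{n-2}^{(1)}).$ Since $X_n^{(1)}$ is a polynomial in these coordinates with nonnegative coefficients, $X_n^{(1)-t}$ is coordinate-wise non-increasing; meanwhile $L_n$ is non-increasing in each coordinate and $\theta<1,$ so $\theta^{L_n}$ is non-decreasing. Their opposite monotonicities yield
\beq
E_P\bigl[X_n^{(1)-t}\theta^{L_n}\bigr]\leq E_P\bigl[X_n^{(1)-t}\bigr]\cdot E_P\bigl[\theta^{L_n}\bigr]=E_P\bigl[X_n^{(1)-t}\bigr]\cdot\bigl(1-\veps_1(1-\theta)\bigr)^{n-1}.
\feq
Taking $\frac{1}{n}\log$ of the resulting bound $E_P\bigl(X_n^{-t}\bigr)\leq\bigl(1-\veps_1(1-\theta)\bigr)^{n-1}E_P\bigl(X_n^{(1)-t}\bigr)$ and passing $n\to\infty$ using \eqref{istable} delivers $\Lambda_\veps(t)\leq\Lambda_{\veps_1}(t)+\log\bigl(1-\veps_1(1-\theta)\bigr),$ and the extra term is strictly negative because $\veps_1>0$ and $\theta<1.$

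The step requiring the most care is promoting the additive bound \eqref{couplee} to the multiplicative form displayed above (the derivation of \eqref{couplee} is in fact a chain of multiplicative inequalities that is only collapsed into a sum of logarithms at the very last step, so this is essentially a re-reading of the existing argument). The conditional computation and the application of Harris' inequality to a Bernoulli product measure are routine.
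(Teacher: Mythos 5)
Your proof is correct but takes a genuinely different route from the paper's. Both arguments start from the same multiplicative consequence of the coupling, $X_n\geq c^{M_n}X_n^{(1)}$ with $c=1+b/(a^2+b)$ and $M_n=\sum_{k=0}^{n-2}\zeta_k,$ and both need to split the expectation $E_P\bigl[c^{-tM_n}\bigl(X_n^{(1)}\bigr)^{-t}\bigr].$ The paper does this with H\"{o}lder's inequality: it bounds the product by $\bigl[E_P\bigl(X_n^{(1)-pt}\bigr)\bigr]^{1/p}\bigl[E_P\bigl(c^{-qtM_n}\bigr)\bigr]^{1/q},$ exploits the i.\,i.\,d.\ structure of the $\zeta_k$'s, and then sends $p\downarrow 1.$ You instead integrate out $\zeta_k$ conditionally on the $\eta^{(1)}$-noise (using the coupling's conditional independence to get $E_P\bigl[c^{-tM_n}\mid\calf_n\bigr]=\theta^{L_n}$), and then decouple $\theta^{L_n}$ from $\bigl(X_n^{(1)}\bigr)^{-t}$ via Harris/FKG for product measures, using the observation that $X_n^{(1)}$ is coordinate-wise non-decreasing in the $\eta^{(1)}$-variables while $\theta^{L_n}$ is non-decreasing when $\theta<1.$ Both ingredients you invoke are correct: the conditional law of $\zeta_k$ given $\eta_k^{(1)}$ is as you state ($\zeta_k\equiv 0$ on $\{\eta_k^{(1)}=1\}$ and $P(\zeta_k=1\mid\eta_k^{(1)}=0)=(\veps_1-\veps)/\veps_1$), and the monotonicity needed for Harris holds since $X_n^{(1)}$ is a polynomial with nonnegative coefficients in the $\eta^{(1)}$-coordinates. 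Your approach buys a cleaner conclusion: it avoids the H\"{o}lder limit $p\downarrow 1$ entirely and yields the sharp bound $\Lambda_{\veps_1}(t)-\Lambda_\veps(t)\geq -\log\bigl(1-(\veps_1-\veps)(1-c^{-t})\bigr)>0$ directly (which is exactly the paper's pre-H\"{o}lder bound evaluated at $q=1,$ a regime H\"{o}lder forbids). The trade-off is that it requires the slightly less elementary correlation inequality and the monotonicity check, whereas the paper's route uses only H\"{o}lder and the elementary bound $\log(1-x)<-x.$
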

\begin{proof}
Recall the notation introduced in the course of the proof of Proposition~\ref{renewal}. It follows from the inequality in \eqref{couplee} that
\beq
\frac{1}{X^{(1)}_n}&\geq& \frac{1}{X_n}\cdot\prod_{k=0}^{n-2} \Bigl(1+\frac{b}{a^2+b}\Bigr)^{\zeta_k}
=\frac{1}{X_n}\cdot\exp\left\{\sum_{k=0}^{n-2} \zeta_k \cdot \log \Bigl(1+\frac{b}{a^2+b}\Bigr)\right\}.
\feq
It follows then from H\"{o}lder's inequality that for any constants $t>0,$ $p>1$ and $q>0$ such that $\frac{1}{p}+\frac{1}{q}=1,$ we have
\beq
E_P\left(\frac{1}{X_n^t}\right)&\leq& \left[E_P\left(\frac{1}{\bigl(X_n^{(1)}\bigr)^{pt}}\right)\right]^{1/p}\cdot \left[E_P\Bigl(\exp\Bigl\{-qt\sum_{k=0}^{n-2} \zeta_k \cdot \log \Bigl(1+\frac{b}{a^2+b}\Bigr)\Bigr\}\Bigr)\right]^{1/q}.
\feq
Therefore, since $\zeta_k$ are i.\,i.\,d. Bernoulli random variables,
\beq
\Lambda_{\veps}(t)&\leq& \frac{1}{p}\cdot \Lambda_{\veps_1}(pt)+\limsup_{n\to\infty}\frac{1}{nq}\log E_P\Bigl(\exp\Bigl\{-qt\sum_{k=0}^{n-2} \zeta_k \cdot \log \Bigl(1+\frac{b}{a^2+b}\Bigr)\Bigr\}\Bigr)
\\&=&
\frac{1}{p}\cdot \Lambda_{\veps_1}(pt)+\frac{1}{q}\log E_P\Bigl(\exp\Bigl\{-qt\zeta_1 \cdot \log \Bigl(1+\frac{b}{a^2+b}\Bigr)\Bigr\}\Bigr)
\\
&=&
\frac{1}{p}\cdot \Lambda_{\veps_1}(pt)+\frac{1}{q}\log\Bigl[(\veps_1-\veps)\cdot \exp\Bigl\{-qt\log \Bigl(1+\frac{b}{a^2+b}\Bigr)\Bigr\}+(1-\veps_1+\veps)\cdot 1\Bigr]
\\
&=&
\frac{1}{p}\cdot \Lambda_{\veps_1}(pt)+\frac{1}{q}\log\Bigl[(\veps_1-\veps)\cdot \Bigl(1+\frac{b}{a^2+b}\Bigr)^{-qt}+(1-\veps_1+\veps)\Bigr]
\\
&\leq&
\frac{1}{p}\cdot \Lambda_{\veps_1}(pt)-\frac{1}{q}(\veps_1-\veps)\Bigl[1- \Bigl(1+\frac{b}{a^2+b}\Bigr)^{-qt}\Bigr],
\feq
where in the last step we used the inequality $\log(1-x)<x.$
Since $\Lambda_\veps(t)$ is a continuous function of $t,$ by letting $p$ to approach one and thus $q$ to approach infinity, we obtain that
\beq
\Lambda_{\veps_1}(t)-\Lambda_\veps(t)\geq t(\veps_1-\veps)\log \Bigl(1+\frac{b}{a^2+b}\Bigr)>0.
\feq
The proof of the lemma is complete.
\end{proof}
We now turn to the proof of part {\em (b)} of Theorem~\ref{main}.
\begin{proposition}
\label{prop3}
The critical exponent $s_\veps$ is a strictly decreasing continuous function of $\veps$ on $[0,\veps^*).$
Furthermore, \eqref{limits} holds true.
\end{proposition}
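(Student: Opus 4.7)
The plan is to analyze the zeros of the family of convex functions $\Lambda_\veps(\cdot)$ as $\veps$ varies, leveraging three ingredients: the strict monotonicity in $\veps$ from Lemma~\ref{lambda-s}, the convexity in $t$ together with the unique-positive-zero description in Proposition~\ref{prop}~(iii), and the joint continuity of $(\veps,t)\mapsto \Lambda_\veps(t)$, which is inherited from the Perron--Frobenius-type representation $\Lambda_\veps(t)=\log\alpha_t(\veps)$ exploited in the proof of Lemma~\ref{stablef}. Strict monotonicity of $s_\veps$ will follow by fixing $0<\veps_1<\veps_2<\veps^*$ and applying Lemma~\ref{lambda-s} at $t=s_{\veps_1}$: one obtains $\Lambda_{\veps_2}(s_{\veps_1})>\Lambda_{\veps_1}(s_{\veps_1})=0=\Lambda_{\veps_2}(s_{\veps_2})$, and since $\Lambda_{\veps_2}$ is convex with $\Lambda_{\veps_2}(0)=0$ and $\Lambda'_{\veps_2}(0)=-\gamma(\veps_2)<0$, its unique positive zero at $s_{\veps_2}$ separates its negative and positive regions, forcing $s_{\veps_1}>s_{\veps_2}$. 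Continuity of $s_\veps$ will then follow from the implicit function theorem applied to the defining equation $\Lambda_\veps(s_\veps)=0$, because $\partial_t\Lambda_\veps(s_\veps)>0$ by convexity and the sign change at $s_\veps$.

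For the limit as $\veps\downarrow 0$, I would invoke $\Lambda_0(t)=-t\log\lambda_1<0$ for every $t>0$: joint continuity yields $\Lambda_\veps(T)<0$ for any prescribed $T>0$ once $\veps$ is small enough, so that $s_\veps>T$, and sending $T\to\infty$ delivers $s_\veps\to\infty$. For the limit as $\veps\uparrow\veps^*$, monotonicity ensures that $s^*:=\lim_{\veps\uparrow\veps^*}s_\veps$ exists in $[0,\infty)$. Assuming for contradiction that $s^*>0$, the inequalities $s^*\leq s_\veps$ and $\Lambda_\veps\leq 0$ on $[0,s_\veps]$ yield, by continuity, $\Lambda_{\veps^*}(s^*)\leq 0$. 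On the other hand, $\Lambda_{\veps^*}(0)=0$ together with $\Lambda'_{\veps^*}(0)=-\gamma(\veps^*)=0$ (by the defining property \eqref{star}) and convexity of $\Lambda_{\veps^*}$ imply that $\Lambda_{\veps^*}$ is non-decreasing on $[0,\infty)$ with $\Lambda_{\veps^*}\geq 0$; a non-degeneracy argument (discussed next) upgrades this to $\Lambda_{\veps^*}(t)>0$ for all $t>0$, producing the desired contradiction and hence $s^*=0$.

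The main obstacle is this last non-degeneracy assertion, for which two routes are available. The first is to establish strict positivity of the asymptotic variance $\Lambda''_{\veps^*}(0)>0$, which holds because $\log R_1$ is non-constant under $Q$ and $R_n$ is an irreducible aperiodic Markov chain, so standard results for its additive functionals apply; combined with convexity and $\Lambda'_{\veps^*}(0)=0$, this forces $\Lambda'_{\veps^*}(t)>0$ on $(0,\infty)$ and hence $\Lambda_{\veps^*}(t)>0$ there. The second route is to invoke real-analyticity of $t\mapsto \Lambda_{\veps^*}(t)$ inherited from the spectral representation in Lemma~\ref{stablef}, which rules out any interval on which $\Lambda_{\veps^*}$ vanishes since $\Lambda_{\veps^*}(t)\to +\infty$ as $t\to\infty$ by the argument used in the proof of Proposition~\ref{prop}~(iii). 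Either suffices to close the proof.
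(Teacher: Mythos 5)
Your outline correctly isolates the three ingredients (monotonicity from Lemma~\ref{lambda-s}, continuity of $s_\veps$ from continuity of $\Lambda_\veps(\cdot)$ plus the sign change at the unique zero, and a separate analysis of the two limits in~\eqref{limits}), and the handling of $\veps\uparrow\veps^*$ via a non-degeneracy argument for $\Lambda_{\veps^*}$ is actually a bit more explicit than the paper's, which at that point appeals informally to the figure. However, there are two gaps, both traceable to the same unearned assertion that joint continuity of $(\veps,t)\mapsto\Lambda_\veps(t)$ is ``inherited from the Perron--Frobenius-type representation.'' Continuity of the top eigenvalue in a parameter is \emph{not} automatic for these infinite-dimensional positive operators, and in fact the paper does not get it this way: it proves, via the coupling construction from Proposition~\ref{renewal} combined with H\"older's and Jensen's inequalities, the quantitative one-sided Lipschitz bound
$0<\Lambda_{\veps_1}(t)-\Lambda_\veps(t)\leq t\,\frac{\veps_1-\veps}{\veps_1}\log\bigl(1+\tfrac{b}{a^2}\bigr)$,
which gives continuity of $\Lambda_\veps(t)$ in $\veps$ \emph{only on intervals bounded away from $0$}. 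That is all that is needed for continuity of $s_\veps$ on $(0,\veps^*]$, but it does \emph{not} give continuity at $\veps=0$: as $\veps\downarrow 0$ the Lipschitz constant blows up, and the $\veps=0$ chain is a degenerate (non-recurrent) limit. Your argument for $\lim_{\veps\downarrow 0}s_\veps=\infty$ relies precisely on $\Lambda_\veps(T)\to\Lambda_0(T)=-T\log\lambda_1<0$, which is therefore not justified. The paper avoids this entirely: it proves the $\veps\downarrow 0$ limit by a direct probabilistic estimate. Since $S_k\to\lambda_1^{-1}<1$, one fixes $k_0$ so that $S_k\leq u:=\tfrac12(1+\lambda_1^{-1})<1$ for $k>k_0$, bounds $P(R_n=S_k\text{ with }k\leq k_0\mid\calg_{n-1})\leq(k_0+1)\veps$, and concludes $\Lambda_\veps(t)\leq\log\bigl[u^t(1-(k_0+1)\veps)+a^{-t}(k_0+1)\veps\bigr]<0$ for $\veps$ small, whence $s_\veps>t$ for any prescribed $t$. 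You need to supply an argument of this type (or prove continuity of $\Lambda_\veps(t)$ at $\veps=0$ by some other means) to close the proposal; as written the $\veps\downarrow0$ limit does not follow.

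A secondary, more minor point: invoking ``the implicit function theorem'' for continuity of $s_\veps$ is stronger than what you have in hand (it requires differentiability in both variables). The elementary version suffices: once one knows $\Lambda_\veps(t)$ is continuous in $\veps$ on compacts bounded away from $0$, convexity and the strict sign change of $\Lambda_\veps$ across $s_\veps$ (guaranteed by $\Lambda'_\veps(0)<0$ and $\Lambda_\veps(t)\to+\infty$) immediately give continuity of the zero, which is essentially what the paper does.
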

\begin{proof}
The desired monotonicity of $s_\veps$ follows directly from Lemma~\ref{lambda-s}, see Fig.~\ref{figure} above.
We will next show that $s_\veps$ is a continuous function of $\veps$ on $(0,1).$ Due to the monotonicity of
$s_\veps,$ the following one-sided limits exist for any $\veps\in (0,\veps^*):$
\beq
s_\veps^+=\lim_{\delta\downarrow\veps} s_\delta \qquad \mbox{\rm and} \qquad s_\veps^-=\lim_{\delta\uparrow\veps} s_\delta.
\feq
The second limit, namely $s_{\veps^*}^-,$ exists also for $\veps=\veps^*.$ Set $s_{\veps^*}:=0.$ If either $s_\veps^+>s_\veps$ or $s_\veps^- <s_\veps$ for some $\veps\in[0,\veps^*],$ then (see Fig.~\ref{figure} above) $\Lambda_\delta(t^*)$ is not a continuous function of $\delta$ at any point $t^*$ within the open
interval $(s_\veps,s_\veps^+)$ or, respectively, $(s_\veps^-,s_\veps).$ To verify the continuity of $s_\veps$ on $(0,\veps^*]$ it therefore suffices
to show that $\Lambda_\veps(t)$ is a continuous function of $\veps$ for any fixed $t>0.$
\par
We will use again the notation and the coupling construction introduced in the course of the proof of Proposition~\ref{renewal}.  Recall $T_n$ from \eqref{tin}, and let $T_n^{(1)},$ $n\in\nn,$ be the corresponding stopping times associated with the sequence $X_n^{(1)}.$ Let $\chi_n=\one(T_n\neq T_n^{(1)}).$ The random variables $\chi_n$ form a two-state Markov chain with transition kernel determined by
\beq
P(\chi_{n+1}=1|\chi_n=0)=P\bigl(\eta_n=\eta_n^{(1)}=0\bigr)=P(\zeta_{n+1}=0)=\veps_1-\veps
\feq
and
\beq
P(\chi_{n+1}=1|\chi_n=1)=1-P\bigl(\eta_n=\eta_n^{(1)}=0\bigr)=1-\veps.
\feq
The stationary distribution $\pi=\bigl(\pi(0),\pi(1)\bigl)$ of this Markov chain is given by
\beq
\pi(0)=\frac{\veps}{\veps_1}\qquad \mbox{\rm and} \qquad \pi(1)=\frac{\veps_1-\veps}{\veps_1}
\feq
Similarly to \eqref{couplee}, in virtue of Lemma~\ref{bounds} we have:
\beq
\frac{1}{X^{(1)}_n}&\leq& \frac{1}{X_n}\cdot\prod_{k=0}^{n-2} \Bigl(\frac{a^2+b}{a}\cdot \frac{1}{a}\Bigr)^{\xi_k}
=\frac{1}{X_n}\cdot\exp\left\{\sum_{k=0}^{n-2} \chi_k \cdot \log \Bigl(1+\frac{b}{a^2}\Bigr)\right\}.
\feq
It follows then from H\"{o}lder's and Jensen's inequalities that for any constants $t>0,$ $p>1$ and $q>0$ such that $\frac{1}{p}+\frac{1}{q}=1,$ we have
\beq
E_P\left(\frac{1}{\bigl(X^{(1)}_n\bigr)^t}\right)&\leq& \left[E_P\left(\frac{1}{X_n^{pt}}\right)\right]^{1/p}\cdot \left[E_P\Bigl(\exp\Bigl\{qt\sum_{k=0}^{n-2} \chi_k \cdot \log \Bigl(1+\frac{b}{a^2}\Bigr)\Bigr\}\Bigr)\right]^{1/q}
\\
&\leq&
\left[E_P\left(\frac{1}{X_n^{pt}}\right)\right]^{1/p}\cdot \left[\exp\Bigl\{qt\sum_{k=0}^{n-2} E_P(\chi_k) \cdot \log \Bigl(1+\frac{b}{a^2}\Bigr)\Bigr\}\right]^{1/q}.
\\
&=&
\left[E_P\left(\frac{1}{X_n^{pt}}\right)\right]^{1/p}\cdot \Bigl(1+\frac{b}{a^2}\Bigr)^{t\sum_{k=0}^{n-2} E_P(\chi_k)}.
\feq
Since Markov chain $\chi_n$ is aperiodic, its stationary distribution $\pi$ is the limiting distribution. Thus,
\beq
\Lambda_{\veps_1}(t)&\leq& \frac{1}{p}\cdot \Lambda_\veps(pt)+t\log \Bigl(1+\frac{b}{a^2}\Bigr)\lim_{n\to\infty} P(\xi_k=1)
\\
&=&
\frac{1}{p}\cdot \Lambda_\veps(pt)+t\pi(1)\cdot \log \Bigl(1+\frac{b}{a^2}\Bigr)=\frac{1}{p}\cdot \Lambda_\veps(pt)+t\frac{\veps_1-\veps}{\veps_1}\log \Bigl(1+\frac{b}{a^2}\Bigr).
\feq
Since $\Lambda_\veps(t)$ is a continuous function of $t$ and $p>1$ is arbitrary, we conclude that
\beq
0<\Lambda_{\veps_1}(t)-\Lambda_\veps(t)\leq t\frac{\veps_1-\veps}{\veps_1}\log \Bigl(1+\frac{b}{a^2}\Bigr),
\feq
and thus, for a given $t>0,$ $\Lambda_\veps(t)$ is a Lipschitz function of the parameter $\veps$ on any interval bounded away from zero. This completes the proof of the continuity of
$s_\veps$ on $(0,\veps^*].$ In particular, the second limit in \eqref{limits} holds true.
\par
To complete the proof of the proposition it remains to prove that the first limit in \eqref{limits} holds true, namely
$\lim_{\veps\downarrow 0} s_\veps=\infty.$ To this end it suffices to show that $s_\veps>t$ for all $\veps>0$ small enough.
To this end, observe that since \eqref{ze} implies $\lim_{n\to\infty} S_n=\lambda_1^{-1}<1,$ there exists $k_0\in\nn$ such that
$S_k<\frac{1}{2}\bigl(1+\lambda_1^{-1}\bigr)<1$ for all $k>k_0.$ For $n\in\nn,$ let $\delta_n=\one\bigl(R_n=S_k~\mbox{\rm with}~k> k_0\bigr)$ and let
$\calg_n=\sigma(R_1,R_2,\ldots,R_n)$ be the $\sigma$-algebra generated by the random variables $R_i$ with $1\leq i\leq n.$
Then, with probability one, we have for $n\geq 2,$
\beqn
\nonumber
P\bigl(\delta_n=0\bigl|\calg_{n-1}\bigr)&\leq& P\Bigl(\bigcup_{0\leq k\leq k_0} \{\eta_{n-k-2}=0\}\Bigl|\calg_{n-1}\Bigr)
\\
\label{key}
&\leq&
\sum_{k=0}^{k_0} P\bigl(\eta_{n-k-2}=0\bigl|\calg_{n-1}\bigr)=(k_0+1)\veps.
\feqn
Denote $u=\frac{1}{2}\bigl(1+\lambda_1^{-1}\bigr)$ and $v=a^{-1}.$ It follows from \eqref{istable}, \eqref{key},
and \eqref{ordering} that for $\veps<(1+k_0)^{-1}$ we have:
\beq
\Lambda_\veps(t)&=&\lim_{n\to\infty} \frac{1}{n}\log E_Q\Bigl(\prod_{i=1}^n R_i^t\Bigr)\leq
\limsup_{n\to\infty} \frac{1}{n}\log E_Q\Bigl(\prod_{i=1}^n u^{t\sigma_i}v^{t(1-\sigma_i)}\Bigr)
\\
&\leq&
\log \bigl[u^t\bigl(1-\veps(k_0+1)\bigr) +v^t\veps(k_0+1)\bigr].
\feq
It thus holds that $\Lambda_\veps(t)<0,$ and hence $s_\veps>t,$ for all $\veps>0$ small enough. Since $t>0$ is arbitrary, it follows that $\lim_{\veps \downarrow 0} s_\veps=\infty.$ The proof of the proposition is complete.
\end{proof}
\section{Proof of Proposition~\ref{stat}}
\label{stats}
For $k\in\nn,$ et $\calf_{k-1}=\sigma(X_0,\eta_0,X_1,\eta_1,\ldots,X_{k-2},\eta_{k-2},X_{k-1},\eta_{k-1},X_k,X_{k+1})$ be the $\sigma$-algebra generated by the random variables $\eta_i$ with $i\leq k-1$ and $X_i$ with $i\leq k+1.$ It follows from \eqref{xdef} that $\eta_k$ is independent of $\calf_{k-1}.$ In the proof below, we will repeatedly use
without further notice the fact $E_P(X\eta_k)=E_P[X(1-\veps)]$ for a random variable $X\in \calf_{k-1}.$
\begin{proof}
\item [\em (a)] In order to verify the claim, take the expectation on both sides of \eqref{xdef} and recall \eqref{lambdas}, \eqref{ze}.
\item [\em (b)] Take the square and then take the expectation on the both sides of \eqref{xdef}, to obtain:
\beqn
\nonumber
E_P(X_{n+1}^2)&=&a^2E_P(X_n^2)+b^2(1-\veps)^2E_P(X_{n-1}^2)+2b(1-\veps) E_P(aX_nX_{n-1})
\\
&=&
\nonumber
a^2E_P(X_n^2)+b^2(1-\veps)^2E_P(X_{n-1}^2)
\\
\nonumber
&&
\qquad\qquad
+2b(1-\veps) E_P\bigl[(X_{n+1}-b\eta_{n-1}X_{n-1})X_{n-1}\bigr]
\\
\nonumber
&=&
a^2E_P(X_n^2)-b^2(1-\veps)^2E_P(X_{n-1}^2)+2b(1-\veps) E_P(X_{n+1}X_{n-1})
\\
\nonumber
&=&
\bigl(a^2+2b(1-\veps)\bigr)E_P(X_n^2)-b^2(1-\veps)^2E_P(X_{n-1}^2)
\\
\label{sq}
&&
\qquad\qquad
+2b(1-\veps) E_P(h_n),
\feqn
where $h_n:=X_{n-1}X_{n+1}-X_n^2.$
\par
We will next derive a Cassini-type formula for $E_P(h_n).$ We have:
\beq
aE_P(h_{n+1})&=&E_P\bigl[(aX_n)\cdot  X_{n+2}\bigr]-aE_P\bigl(X_{n+1}^2\bigr)
\\
&=&
E_P\bigl[(X_{n+1}-b\eta_{n-1}X_{n-1})\cdot (aX_{n+1}+b\eta_nX_n)-aX_{n+1}^2\bigr]
\\
&=&
E_P\bigl[b\eta_nX_nX_{n+1}-ab\eta_{n-1}X_{n-1}X_{n+1}-b^2\eta_{n-1}\eta_nX_{n-1}X_n\bigr]
\\
&=&
E_P\bigl[b(1-\veps) X_nX_{n+1}-ab\eta_{n-1}(X_n^2+h_n)-b^2(1-\veps) \eta_{n-1}X_{n-1}X_n\bigr]
\\
&=&
E_P\bigl[b(1-\veps) X_n(X_{n+1}-aX_n-b\eta_{n-1}X_{n-1})-ab\eta_{n-1}h_n\bigr]
\\
&=&
-abE_P(\eta_{n-1}h_n).
\feq
Hence,
\beqn
\nonumber
E_P(X_nX_{n+2}-X_{n+1}^2)&=&E_P(h_{n+1})=-bE_P(\eta_{n-1}h_n)=\ldots
\\
&=&
\label{cassini}
(-b)^n\veps^{n-1}E_p(\eta_0h_1)=(-1)^nb^{n+1}(1-\veps)^n.
\feqn
Using the notation $Y_n=E_P(X_n^2)$ and substituting \eqref{cassini} into \eqref{sq}, we obtain
\beq
Y_{n+1}=\bigl[a^2+2b(1-\veps)\bigr]Y_n-b^2(1-\veps)^2Y_{n-1}+2(-b)^{n+1}(1-\veps)^n,
\feq
from which the claim in {\em (b)} follows, taking in account that $Y_0=1$ and $Y_1=a^2.$
\item [\em (c)] For any $k\in\nn,$ we have:
\beq
U_{n,k+1}&=&E_P(X_nX_{n+k+1})=aE_P(X_nX_{n+k+1})+bE_P(\eta_{n+k-1}X_nX_{n+k-1})
\\
&=&
aU_{n,k}+b(1-\veps)U_{n,k-1}.
\feq
Furthermore, using notations introduced in the course of proving {\em (b)},
\beq
E_P(X_nX_{n+1})&=& \frac{1}{a}E_P(X_nX_{n+2})-\frac{b(1-\veps)}{a}E_P(X_n^2)
\\
&=&
\frac{1}{a}\bigl[E_P(h_{n+1})+Y_{n+1}^2-b(1-\veps)Y_n^2\bigr].
\feq
The proof of the proposition is complete.
\end{proof}
\section{Concluding remarks}
\label{remarks}
\begin{enumerate}
\item We believe that $K_\veps$ in the statement of Theorem~\ref{main} is decreasing as a function of the parameter $\veps,$ but were unable to prove it.
Some information about this constant can be derived from the formulas given in \cite{darek, goldie} (see also references in \cite{darek}) using the recursion representation \eqref{recur} of $W_n$ and the regeneration structure described in Section~\ref{proofm} (see the proof of Lemma~\ref{stablef} there) which reduces the Markov setup of this paper to an i.\,i.\,d. one considered in \cite{darek, goldie, kesten}.
\item We think that $s_\veps$ is a strictly convex function of $\veps$ on $[0,\veps^*),$ but were unable to prove it.
Since $\lim_{\veps\downarrow 0} s_\veps=+\infty,$ Fig.~\ref{figure} strongly suggests that the convexity holds for an interval of small enough values of $\veps$
within $(0,\veps^*).$ We believe that, with $s_{\veps^*}$ set to zero, $s_\veps$ is convex in fact on the whole interval $(0,\veps^*].$
\item The linear model \eqref{xdef} can serve as an ansatz in a general case. For instance, it seems plausible that a result similar to our Theorem~\ref{main} holds for generalized Fibonacci sequences considered in \cite{fibbo}. This is a work in progress by the authors.
    \item Using appropriate variations of the above Proposition~\ref{prop} and Lemma~\ref{stablef}, Theorem~\ref{main} can be extended to a class of recursions
    $\witi W_{n+1}=\theta \cdot \prod_{i=0}^{m-1}R_{nl+i}^{h_i}\witi W_n+Q_n$
with arbitrary $l,m\in\nn,$ positive reals $h_i,$ and suitable coefficients $\theta$ (large enough by absolute value constant) and $Q_n$ (in general random).
For instance, in the spirit of \cite{fbook}, one can consider sequences $\witi W_n=\frac{1}{X_{2n}^2}\sum_{k=0}^{n-1}X_{2k+1}X_{2k+8}$ or
$\witi W_n=\frac{1}{X_n^2}\sum_{k=0}^{n-1} (-1)^kX_k^2.$ The former case corresponds to
$\witi W_{n+1}=Q_n\witi W_n+Q_n$ with $Q_n=R_{2n+1}R_{2n+2}R_{2n+8}R_{2n+9},$ and the later to $\witi W_{n+1}=Q_n\witi W_n+1$ with $Q_n=(-1)^nR_n^2.$ We leave details to the reader.
\end{enumerate}
\bibliographystyle{amsplain}

\end{document}